\newtheorem{remark}{Remark}[section]
\newtheorem{proposition}[remark]{Proposition}%[section]
\newtheorem{theorem}[remark]{Theorem}%[section]
\newtheorem{definition}[remark]{Definition}%[section]
\newtheorem{corollary}[remark]{Corollary}%[section]
\newtheorem{lemma}[remark]{Lemma}%[section]
\newtheorem{example}[remark]{Example}%[section]
\def\R{\mathbb R}
\def\M{\mathbb M}
\def\N{\mathbb N}
\def\X{\mathbb X}
\def\shc{{\mathcal C}}
\def\shd{{\mathcal D}}
\def\shf{{\mathcal F}}
\def\shl{{\mathcal L}}
\def\shr{{\mathcal R}}
\newenvironment{prooff}{{\bf \textit{Proof}}}{\hfill $\Box$ \\}
\numberwithin{equation}{section}
\title{Rough paths and regularization}
\author{Andr\'e  O. GOMES $^1$}
\author{Alberto OHASHI$^2$}
\author{Francesco RUSSO$^3$}
\author{Alan TEIXEIRA$^4$}
\address{$1$ ENSTA Paris, Institut Polytechnique de Paris,
 Unit\'e de Math\'ematiques appliqu\'ees, 828, boulevard des Mar\'echaux, F-91120 Palaiseau, France.
 Departamento de Matem\'atica Universidade Estadual de Campinas 13081-970 Campinas SP-Brazil.} \email{andredeoliveiragomes2@gmail.com}
\address{$2$ Departamento de Matem\'atica, Universidade de Bras\'ilia, 70910-900, Bras\'ilia, Brazil.} \email{amfohashi@gmail.com}
\address{$3$ ENSTA Paris, Institut Polytechnique de Paris,
 Unit\'e de Math\'ematiques appliqu\'ees, 828, boulevard des Mar\'echaux, F-91120 Palaiseau, France}
 \email{francesco.russo@ensta-paris.fr}
\address{$4$ ENSTA Paris, Institut Polytechnique de Paris,
 Unit\'e de Math\'ematiques appliqu\'ees, 828, boulevard des Mar\'echaux, F-91120 Palaiseau, France}
\email{alan.teixeira@ensta-paris.fr}
\date{June 14th 2021}
\begin{document}

\begin{abstract}
Calculus via regularizations and rough paths are two methods
to approach stochastic integration and calculus close to pathwise
calculus. The origin of rough paths theory is purely deterministic,
calculus via regularization is based on deterministic techniques
but there is still a probability in the background.
The goal of this paper is to establish a connection between stochastically
controlled-type processes, a concept reminiscent from rough paths theory,
and the so-called weak Dirichlet processes. As a by-product, we present
the connection between rough and Stratonovich integrals for càdlàg weak
Dirichlet processes integrands and continuous semimartingales integrators.

\end{abstract}

\maketitle

 \noindent {\bf Key words and phrases.} Rough paths; calculus via regularization;
Gubinelli's derivative.

\noindent {\bf 2020 MSC}. 60H05; 60G05; 60G44; 60L20.

\section{Introduction}

This paper focuses on two variants of stochastic calculus
of pathwise type: calculus via regularization
and rough paths. The recent literature on rough paths
is very rich and it is impossible to list it here completely.
It was started in \cite{lyons} continued by the monograph
\cite{lyonsq} which focused on
rough differential equations. The corresponding
 integral was introduced
later by M.  Gubinelli, see \cite{gubinelli2004}.
Later, a great variety of contributions on
the subject appeared and it is not possible to list all of them.
We refer however to the  monograph \cite{hairerbook}
to a fairly rich list of references and for a complete
development of the subject.
In spite of some recent work mixing probability and deterministic theory,
see e.g. \cite{le2020stochastic,chevyrev2019,friz2020rough}, the theory of rough paths is essentially deterministic.

Stochastic calculus via regularization was started first
by F. Russo and P. Vallois in \cite{RVCras}.
The calculus was later continued in \cite{rv1,  rv96, rv4}
in the framework of continuous integrators, essentially
with finite quadratic variation. The case
of processes with higher variation was first introduced in
\cite{er, er2} and continued in \cite{coviello1, grv, gnrv, gradno, russoviens,
bbv11}, especially in relation with fractional Brownian motion and related
processes. A not very recent survey paper in  the  framework of
finite dimensional processes is \cite{russo2007elements}. Stochastic
calculus via regularization for processes taking values in Banach spaces, with
applications to the path-dependent case, was realized in
\cite{DGR, DGRnote}  and  in \cite{cosso_russo15a}.
The case of real-valued jump integrators was first introduced in \cite{rv93}
and then deeply investigated in \cite{nunno} and later
by \cite{BandiniRusso1}.
Applications to mathematical finance (resp. to fluidodynamics modeling)
were published in \cite{coviello2bis} (resp. \cite{flaguru}).

An important notion which emerged in calculus via regularization
is the notion of weak Dirichlet processes, started in \cite{er, gr}.
Such a process $X$ is the sum of a local martingale $M$ and an orthogonal
process $A$ such that $[A,N] = 0$ for any continuous martingale.
This constitutes a natural generalization of
the notion of semimartingale and of Dirichlet process (in the sense
of F\"ollmer), see \cite{FolDir}. In particular, \cite{gr}
allowed to establish chain rule type decomposition extending It\^o
formulae with applications to control theory, see \cite{gr1}.
That concept was extended to the jump case by \cite{cjms}
and its related calculus was performed by \cite{BandiniRusso1}
with applications to BSDEs, see \cite{BandiniRusso2}.
%A framework in the context of Banach space valued integrators
%ot in the path-dependent case was realized in
%\cite{DGR, DGRnote}  and then
In \cite{DGR1, DGR2} one has performed weak
Dirichlet decomposition of real functional of Banach space-valued
processes. In \cite{cosso_russo15a, DGRclassical} one has investigated
strict solutions of path-dependent PDEs.

In this paper
 we  wish first to give a key to revisit 
the theory of rough paths under the perspective
of stochastic calculus via regularizations.
The idea here is not to summarize the theory
of rough paths integrals, but to propose a variant version which
is directly probabilistic.
In particular, we emphasize
the strong link between the notion of weak Dirichlet
process and one of stochastically controlled process,
% (see Definition \ref{DStochGub}),
which is a stochastic
version of the one proposed by Gubinelli \cite{gubinelli2004}.
According to Definition \ref{DStochGub} such a process fulfills
\begin{equation}\label{stintr}
Y_t - Y_s = Y'_s (X_t - X_s) + R^Y_{s ,t}, s < t,
\end{equation}
where
\begin{equation}\label{orthintr}
\lim_{\varepsilon\rightarrow 0^+}\frac{1}{\varepsilon}\int_0^t R^Y_{s,s+\varepsilon} (X_{s+\varepsilon} - X_s)ds=0,
\end{equation}
in probability for each $t \in [0,T]$.

  Here, $X$ is the reference driving noise, $Y'$ is a process (not necessarily admitting $\gamma$-H\"older continuous paths). The orthogonality condition (\ref{orthintr}) resembles the $2\gamma$-H\"older-regularity condition reminiscent from \cite{gubinelli2004}.

Propositions \ref{stochex1} and \ref{stochex2} present the connection between weak Dirichlet processes and stochastically controlled processes. In particular, when the reference driving noise is a martingale, then both concepts coincide. As a side effect, Theorem \ref{RoughSem} shows Stratonovich integration as a stochastic rough-type integration for weak Dirichlet integrands and continuous semimartingale integrators. The connection between rough paths theory with semimartingales has been investigated by some authors. \cite{coutin2005semi} shows pathwise Wong-Zakai-type theorems for Stratonovich SDEs driven by continuous semimartingales. In particular, the
  integral defined by rough paths theory agrees with Stratonovich integrals
  for real-valued functions $f(X)$ of the driving noise $X$, see also Proposition 17.1 in \cite{friz}. Recently,  \cite{friz2020rough} introduces a concept of rough semimartingales and develops the corresponding stochastic integration having a deterministic rough path in the background and mixing with $p$-variation regularity. Beyond
  semimartingale driving noises, we drive attention to the recent work of \cite{tindelliu}. The authors have established the connection between rough integrals and trapezoidal Riemann sum approximations for controlled processes integrands (in the pathwise sense of \cite{gubinelli2004}) and a general class of Gaussian driving noises.

  In this article, we take full advantage of the probability measure and the stochastic controllability (\ref{stintr}) to establish consistency between stochastic rough-type and Stratonovich integrals  for more general integrands.
  In the companion paper in preparation \cite{ORrough}, a detailed analysis on stochastic rough-type integrals driven by Gaussian rough paths and their connection with Stratonovich and Skorohod integrals is presented.

The paper is organized as follows.
After this introduction, in  Section \ref{S3} we introduce
some notations about matrix-valued calculus via regularization.
In Section \ref{S4} we introduce the notion of stochastically
controlled paths and the one of stochastic Gubinelli derivative,
under the inspiration of the classical rough paths theory.
We link this with the notion of Dirichlet process.
In Section \ref{SSecond} we introduce the {\bf second order process}
(connected with the L\'evy area) and
finally in Section \ref{S74} discuss the notion of rough stochastic
integrals via regularization, examining carefully the case when
the integrator is a semimartingale.

\section{Preliminary notions}

\subsection{Basic notations}

We introduce here some basic notations intervening in the
paper. $T >0$ will be a finite fixed horizon.
Regarding linear algebra, vectors or elements of $\R^d$
will be assimilated to column vectors, so that if $x$ is a vector in $\R^d$, then $x^\top$ is a row vector.

We continue fixing some  notations. In the sequel,
 finite-dimensional Banach spaces $E$ will be equipped with a norm $|\cdot|$,
typically $E = \R^d$.
Let $T > 0$ be a  fixed maturity.
For $\alpha \in ]0,1]$, the   notation $C^{[\alpha]}([0,T];E)$ is reserved for
$E$-valued paths  defined on $[0,T]$, H\"older continuous of index
$\alpha \in ]0,1]$.
% see item 7. of Example \ref{E246}.
For $X \in {C}^{[\alpha]}([0,T];E)$, the usual seminorm is given by
$$\|X\|_{\alpha}:=\sup_{s,t\in [0,T], s \neq t}\frac{|X_{s,t}|}{|t-s|^\alpha},$$
where we set
\begin{equation} \label{EIncr}
  X_{s,t}:= X_t-X_s, \ 0 \le s,t \le T.
\end{equation}
When $E = \R$ we simply write $C^{[\alpha]}([0,T])$

 For a two-parameter function
$R:[0,T]^2 \rightarrow \R$, vanishing on the diagonal
$ \{(s,t) \vert 0 \le s = t \le T\}$, we write
$R(s,t):= R_{s,t}$.
We say that $ R \in C^{[\alpha]}([0,T]^2)$ if

\begin{equation}\label{EIncrBis}
  \|R\|_{\alpha}:=\sup_{s,t\in [0,T]^2}\frac{|R_{s,t}|}{|t-s|^\alpha} < \infty.
  \end{equation}
By convention the quotient $\frac{0}{0}$ will set to zero. In the sequel, if $ n \in \N^*$, we will extend a function
$ R \in C([0,T]^n)$ to $\R^n$ by continuity,
setting
\begin{equation} \label{EContExtension}
 R_{t_1, \ldots, t_n}:= R_{(t_1 \wedge T), \ldots, (t_n \wedge T)}.
 \end{equation}

$(\Omega,\shf,P)$ will be a fixed probability space.
Let $X^1, X^2$ be two stochastic processes, continuous for simplicity.

We  introduce
\begin{equation}\label{SIVR1Cn}
C(\varepsilon , X^1, X^2)(t)=\int _0^t
\frac{\big(X^1_{s+\varepsilon}-X^1_s\big)\big(X^2_{s+\varepsilon}
-X^2_s\big)}{\varepsilon}ds, \quad t \ge 0.
\end{equation}
In the sequel $(\shf_t)$ will be a filtration fulfilling the usual condition.
\begin{definition} \label{D24}
\begin{enumerate}
\item
The {\bf covariation}  of $X^1$ and $X^2$
is the continuous process (whenever it exists) $[X^1,X^2]$ such that, for $t\geq 0$,
$$C(\varepsilon , X^1, X^2)(t)\ \mbox{ converges in probability  to } \  [X^1,X^2]_t.$$
We say that the covariation $[X^1, X^2]$ exists in the strong sense
if moreover
 \begin{equation}\label{e2}
   % \big\|[X^1,X^2]_T\big\|:=
   \sup_{0<\varepsilon\leq
     1}
\int _0^T
  \frac{\left \vert \big(X^1_{s+\varepsilon}-X^1_s\big)\big(X^2_{s+\varepsilon}
-X^2_s\big) \right\vert}{\varepsilon}ds < \infty.
\end{equation}
  % \big\|[ C(\varepsilon , X^1, X^2)  \big\|(T) <+\infty.
%\end{equation}

\item A vector of processes $(X^1,\cdots,X^d)^\top$ is said to have all its {\bf mutual  covariations}
 if
 $[X^i,X^j]$ exists for every $1 \le i,j \le d$.
\item A real process $X$ is said to be {\bf strong finite cubic variation process},
  see \cite{er2}, if there is a process $\xi$ such that,
  for every $t \in [0,T]$,
  $$
  \int _0^t \frac{\vert X^1_{s+\varepsilon}-X^1_s\big\vert^3}
    {\varepsilon}ds \rightarrow \xi,
$$
in probability. If $\xi = 0$ then $X$ is said to have
 zero cubic variation.
\item
  A real-valued (continuous) $(\shf_t)$-{\bf martingale
    orthogonal process} $A$ is a continuous adapted process
 such that $[A,N] = 0$ for every
 $(\shf_t)$-local martingale $N$.
 A  real-valued (continuous) $(\shf)$-{\bf weak Dirichlet process} is
 the sum of a continuous $(\shf_t)$-local martingale $M$
 and an  $(\shf_t)$-martingale
    orthogonal process.
\end{enumerate}
\end{definition}

\begin{remark} \label{RBracketSem}
\begin{enumerate}
\item If $X^1, X^2$ are two semimartingales
  then $(X^1,X^2)^\top$ has all its mutual covariations, see Proposition 1.1 of
  \cite{rv2} and $[X^1,X^2]$ is the classical covariation
  of semimartingales.
\item It may happen that $[X^1,X^2]$ exists but $(X^1,X^2)^\top$ does
  not have all its mutual covariations, see Remark 22 of
  \cite{russo2007elements}.
\item If $X^1$ (resp. $X^2$) has $\alpha$-H\"older
  (resp.  $\beta$-H\"older) paths with
  $\alpha + \beta > 1$, then $[X^1,X^2]=0$,
  see Propositions and 1 of  \cite{russo2007elements}.
  \end{enumerate}
 \end{remark}

Suppose that $M = (M^1,\ldots,M^d)$,
and $M^1, \ldots,M^d$ are real-valued local martingales.
In particular $M^\top$ is an $\R^d$-valued local martingale.
We  denote by  $\shl^2(d[M,M])$ the space
of processes $H = (H^1,\ldots,H^d)$ where $H^1,\ldots,H^d$
are real progressively measurable processes
and
\begin{equation} \label{SIVR32progVect}
\sum_{i,j} \int_0^T  H^i_s H^j_s d[M^i,M^j]_s < \infty \quad {\rm  a.s.}
\end{equation}
$\shl^2(d[M,M])$ is an $F$-space with respect to the metrizable
topology $d_2$ defined as follows:
  $(H^n)$ converges to $H$ when $n \rightarrow \infty$ if $$ \sum_{i,j} \int_0^T ((H^n)^i_s - H^i_s)  ((H^n)^j_s - H^j_s) d[M^i,M^j]_s
 \rightarrow 0,
 $$
 in probability,
 when $n \rightarrow \infty$.

\bigskip

Similarly as in (27), in Section 4.1 of \cite{russo2007elements}, one can prove the following.

.

\begin{proposition}\label{RC2}
Let $X^1, X^2$ be two processes such that
$(X^1,X^2)^\top$ has all its mutual covariations, and
$H$ be a continuous (excepted eventually on a countable
number of points) real-valued process,  then
$$ \frac{1}{\varepsilon} \int_0^\cdot H_s (X^1_{s+\varepsilon} - X^1_s)
 (X^2_{s+\varepsilon} - X^2_s) ds
\rightarrow \int_0^\cdot H_s d[X^1,X^2]_s$$
    in the ucp sense, when $\varepsilon \rightarrow 0$.
\end{proposition}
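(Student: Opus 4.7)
The plan is to follow the scheme used by Russo and Vallois for (27) of \cite{russo2007elements}, adapted to the present hypotheses.

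\textbf{Polarization.} First I would apply the identity $4ab = (a+b)^2 - (a-b)^2$ to the increments $(X^1_{s+\varepsilon}-X^1_s)(X^2_{s+\varepsilon}-X^2_s)$. Because $(X^1,X^2)^\top$ has all its mutual covariations, expanding the squares gives that each of $X^1 \pm X^2$ admits a quadratic variation with $[X^1 \pm X^2, X^1\pm X^2] = [X^1,X^1] \pm 2[X^1,X^2] + [X^2,X^2]$. Hence the statement reduces to the diagonal case: for every process $X$ with quadratic variation $[X,X]$ and every $H$ as in the hypothesis,
$$\frac{1}{\varepsilon}\int_0^{\cdot} H_s (X_{s+\varepsilon}-X_s)^2\, ds \longrightarrow \int_0^{\cdot} H_s \, d[X,X]_s$$
in the ucp sense as $\varepsilon \to 0$.

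\textbf{Ucp convergence of the distribution functions.} Set $F^{\varepsilon}(t) := \varepsilon^{-1}\int_0^t (X_{s+\varepsilon}-X_s)^2\, ds$, which is continuous and nondecreasing in $t$ and converges in probability to $[X,X]_t$ for each $t$. From any sequence $\varepsilon_n \to 0$ I would extract by diagonalization a subsequence along which the convergence holds almost surely on a countable dense subset of $[0,T]$. The classical Pólya--Dini theorem for monotone functions converging pointwise on a dense set to a continuous nondecreasing limit then gives almost sure uniform convergence on $[0,T]$, upgrading the initial pointwise-in-probability convergence to $F^{\varepsilon} \to [X,X]$ in ucp.

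\textbf{Approximation of $H$ and conclusion.} The left-hand side of the target convergence is the Riemann--Stieltjes integral $\int_0^{\cdot} H_s\, dF^{\varepsilon}(s)$. I would approximate $H$ on $[0,T]$ by continuous functions $H^n$ converging uniformly to $H$ outside arbitrarily small neighbourhoods of its (at most countable) jump set. Since $[X,X]$ is continuous, that jump set has zero $d[X,X]$ measure, so $\int_0^{\cdot} H^n_s\, d[X,X]_s \to \int_0^{\cdot} H_s\, d[X,X]_s$ ucp in $n$. For each fixed continuous $H^n$, the ucp convergence $\int_0^{\cdot} H^n_s\, dF^{\varepsilon}(s) \to \int_0^{\cdot} H^n_s\, d[X,X]_s$ follows from the previous step together with a Riemann-sum approximation of $H^n$ on a fine partition of $[0,T]$. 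The main obstacle is the interchange of the two limits, namely controlling $\int_0^{\cdot}(H_s - H^n_s)\, dF^{\varepsilon}(s)$ uniformly in $\varepsilon$. I would handle it by localizing via stopping times $\tau_N = \inf\{t : |H_t|\ge N\}$ so that $H$ is bounded, and then exploiting the fact that $\sup_{\varepsilon} F^{\varepsilon}(T)$ is tight in probability, since it converges in probability to the finite random variable $[X,X]_T$.
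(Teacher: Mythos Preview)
Your approach matches the paper's: the paper gives no self-contained proof, only the pointer ``Similarly as in (27), in Section 4.1 of \cite{russo2007elements}, one can prove the following,'' and your proposal is precisely a reproduction of that argument (polarization to the diagonal case, the Dini-type upgrade of the monotone approximations $F^\varepsilon$ to ucp convergence, then approximation of $H$ and a three-term interchange of limits).

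One point of phrasing in your last step: what follows from $F^\varepsilon(T)\to [X,X]_T$ in probability is tightness of the \emph{family} $\{F^\varepsilon(T)\}_{0<\varepsilon\le 1}$, not a.s.\ finiteness of the single random variable $\sup_{\varepsilon} F^\varepsilon(T)$; the latter is exactly the ``strong sense'' condition~\eqref{e2} of Definition~\ref{D24}, which is not part of the hypothesis. Tightness of the family is, however, all that is needed for your interchange-of-limits argument, so the proof goes through. Similarly, since no filtration is fixed in the statement, the reduction to bounded $H$ should be phrased pathwise (truncation at a level depending on $\omega$, after passing to an a.s.\ convergent subsequence) rather than via stopping times.
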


 \subsection{Matrix-valued integrals
   via regularization}

\label{S3}

Here we will shortly discuss about matrix-valued stochastic
integrals via regularizations.
Let $\M^{n\times d}$ be the linear space of the
real $n \times d$ matrices, which in the rough paths literature
are often associated with tensors.

For every $(s,t) \in \Delta := \{(s,t) \vert 0 \le s \le t \le T\},$
we introduce
two $\M^{n \times d}$-valued  stochastic integrals via regularizations.
%for every $t \in [0,T],$
Let $X$ (resp. $Y$) be an  $\R^d$-valued
(resp. $\R^n$-valued) continuous process (resp. locally integrable process)
 indexed by $[0,T]$.

So $X = (X^1, \ldots, X^d)^\top$ (resp. $Y = (Y^1, \ldots, Y^n)^\top$).

\begin{equation}\label{SIVR1Matrix}
  \int_s^t Y \otimes d^-X:=
  \lim_{\varepsilon \rightarrow 0+} \int _s^t Y_r
\frac{(X_{r+\varepsilon}-X_r)^\top}{\varepsilon} dr,
\end{equation}
\begin{equation}\label{SIVR1MatrixSymm}
\left( {\rm resp.} \ \int_s^t Y \otimes d^\circ X:=
  \lim_{\varepsilon \rightarrow 0+} \int _s^t \frac{Y_r  + Y_{r+\varepsilon}}{2}
\frac{(X_{r+\varepsilon}-X_r)^\top}{\varepsilon} dr \right),
\end{equation}
provided that previous limit holds in probability
and the random function $t \mapsto \int_0^t Y \otimes d^-X,$
(resp. $t \mapsto \int_0^t Y \otimes d^\circ X$),
 admits a continuous version.
In particular
$$ \left(\int_s^t Y \otimes d^-X\right)({i,j}) = \int_s^t Y^i \otimes d^-X^j.$$
We remark that
$\int_s^t Y \otimes d^-X$ exists if and only if
$\int_s^t Y^i \otimes d^-X^j$ exist for every $1 \le i \le n, 1 \le j \le d$. \\

Suppose now that $Y$ is continuous.
We denote by $[X,Y]$ the matrix
$$ [X,Y](i,j) = [X^i,Y^j], 1 \le i \le d, 1 \le j \le n,
 $$
provided those covariations exist.
If $n = d$ and $X = Y$, previous matrix exists for instance
if and only if $X$ has all its mutual covariations.

We will denote by $[X,X]^\R$ the {\bf scalar
quadratic variation} defined as
the real continuous process
(if it exists) such that
$$ [X,X]^\R_t := [X^\top, X]=  \lim_{\varepsilon \rightarrow 0}
\int _0^t
\frac{\vert X_{s+\varepsilon}-X_s\vert^2}
{\varepsilon}ds, \quad t \in [0,T],
 $$
when the limit holds in probability.
 $[X,X]^\R$, when it exists, is an increasing process.
When $X^i$ are finite quadratic variation processes
for every $1 \le i \le d$,
then
$$ [X,X]^\R = \sum_{i=1}^d [X^i,X^i].$$

We recall that $\R^d$-valued continuous process is called
semimartingale
with respect to a filtration $(\shf_t)$,
if all its components are semimartingales.
% see Definition
%\ref{D327} 3.

\section{Stochastically controlled paths and Gubinelli derivative}

\label{S4}

%The idea is to consider a class of paths which are
%{\it controlled} by our reference process $X$.
In \cite{gubinelli2004}, the author introduced
%$X$
%is a deterministic paths and the he introduces
a class of
controlled paths $Y$ by a reference function.

\begin{definition}\label{Gubexample} (Gubinelli).
  Let $X$ be a function belonging to $C^{[\gamma]}([0,T];E)$ with
 $\frac{1}{3} < \gamma
  < \frac{1}{2}$. An element $Y$ of $C^{[\gamma]}([0,T])$
  is called {\bf weakly controlled} (by $X$)
if there exists a function $Y' \in {C}^{[\gamma]}([0,T];E)$
(here by convention, $Y'$ will be a row vector),
so that the remainder term $R$ defined by the relation
$$Y_{s,t} = Y'_s (X_{t} - X_s) + R_{s, t}, s,t \in [0,T],$$
belongs to ${C}^{[2\gamma]}([0,T]^2).$

\end{definition}

From now on $X$ will stand for a fixed $\R^d$-valued reference
 continuous process.
The definition below is inspired by previous one.
\begin{definition}  \label{DStochGub}
\begin{enumerate}
\item
We say that an
  $\R$-valued stochastic process $Y$ is \textbf{stochastically
 controlled}
  by $X$ if there exists an
 $\R^d$-valued stochastic process $Y'$ (here again
indicated by a row vector)
 so that the remainder term $R^Y$
defined by the relation
\begin{equation}\label{lineardep}
Y_t - Y_s = Y'_s (X_t - X_s) + R^Y_{s ,t}, s < t,
\end{equation}
satisfies

\begin{equation}\label{orth}
\lim_{\varepsilon\rightarrow 0^+}\frac{1}{\varepsilon}\int_0^t R^Y_{s,s+\varepsilon} (X_{s+\varepsilon} - X_s)ds=0,
\end{equation}
in probability for each $t \in [0,T]$.
$Y'$ is called {\bf stochastic Gubinelli derivative}.
\item $\mathcal{D}_X$ will denote  the couples of
%stochastically controlled
processes $(Y,Y')$ satisfying (\ref{lineardep}) and (\ref{orth}).
%\item Again the stochastic Gubinelli derivative will be fixed to be a row vector.
\item If $Y$ is an $\R^n$-valued process
whose components are $Y^1,\ldots,Y^n$, then
 $Y$ is said to be stochastically controlled by $X$
if every component $Y^i$ is stochastically controlled by $X$.
The matrix $Y'$ whose rows are stochastic  Gubinelli derivatives $(Y^i)'$ of $Y^i$
is called (matrix) stochastic Gubinelli derivative of $Y$.
The relations \eqref{lineardep} and \eqref{orth} also
make sense in the vector setting.
 $\mathcal{D}_X(\R^n)$ will denote  the couples $(Y,Y')$,
 where $Y$ is a  $\R^n$-valued process,
 being stochastically controlled by $X$ and $Y'$ is a
Gubinelli derivative.
We remark that $R^Y$ also depends on the process $X$.
\end{enumerate}
\end{definition}
Similarly to
the theory of (deterministic) controlled rough paths, in general,
$Y$ can admit different  stochastic Gubinelli derivatives.
 %$Y'$ is not uniquely  determined from $Y$.
 However Proposition \ref{stochex1} states sufficient conditions for uniqueness.

Let us now provide some examples of stochastically controlled processes.
\begin{example} \label{E177}
  Let $X$ be an $\R^d$-valued continuous
  process having all its mutual covariations.
   Let $Y$ be an $\R$-valued process such that, for every $1 \le i \le d$,
 $[Y, X^i]$ exists in the strong sense and  $[Y, X^i] = 0$.
%This happens for instance in the two following
 % cases.
 Consider for instance the three following particular cases.
\begin{itemize}
\item $(Y, X^1,\ldots,X^d)$
  has all its mutual covariations and $[Y,X] = 0$.
  In this case, for every $1 \le i \le d$, $[Y, X^i]$
  exists in the strong sense.
\item 
  Let $Y$ (resp. $X$) be a $\gamma'$-continuous
(resp. $\gamma$-continuous) process
%  If $Y$ is a $\gamma'$-continuous process
 % and $X$ a $\gamma$-continuous process
  with $\gamma + \gamma' > 1$.
 Again $[Y,X^i]$ admits its mutual covariations in the strong sense
 and $[Y,X^i] = 0,$
for every $ 1 \le i \le d$
 since
  $$ \int_0^T \vert Y_{s+\varepsilon} - Y_s\vert
  \vert X^i_{s+\varepsilon} - X^i_s\vert \vert \frac{ds}{\varepsilon}
  \le {\rm const} \
  \varepsilon^{\gamma + \gamma' -1} \rightarrow 0,$$
  when  $\varepsilon \rightarrow 0_+$,
  for every $ 1 \le i \le d.$

We recall that, under those conditions, the Young integral
  $\int_0 ^t Y d^{(y)} X,  t \in [0,T]$ exists, see
\cite{young, ber2}.

%Section \ref{SYoung}.
\item If $X^i, 1 \le i \le d,$ are  continuous bounded variation processes and
  $Y$ is a.s. locally bounded.
\end{itemize}
\begin{enumerate}
\item We claim that $Y$ is stochastically controlled by $X$
 with $Y' \equiv 0$.
\item
 If moreover $[X,X]^\R \equiv 0$, then $Y'$
 can be any locally bounded process:
 therefore
the stochastic Gubinelli derivative is not unique.
 \end{enumerate}
 Indeed, for $ 0 \le s \le t \le T$, write
 $$ Y_t - Y_s = Y'_s (X_t - X_s) + R^Y_{s,t}.$$
 \begin{enumerate}
\item
If $Y' \equiv 0$
we have
$$ \frac{1}{\varepsilon} \int_0^t R_{s,s+\varepsilon} (X_{s+\varepsilon} - X_s) ds \rightarrow 0,$$
when $\varepsilon \rightarrow 0_+$, since
\begin{equation} \label{ERemYoung}
% \frac{1}{\varepsilon} \int_0^t R_{s,s+\varepsilon} (X_{s+\varepsilon} - X_s) ds
%=
  \frac{1}{\varepsilon}  \int_0^t (Y_{s+\varepsilon} - Y_s) (X_{s+\varepsilon} - X_s) ds
\rightarrow [Y,X]=0,
\end{equation}
when $\varepsilon \rightarrow 0$.
\item If  $[X,X]^\R \equiv 0 $ and $Y'$ is a locally bounded process, then
  we also have
$$ \lim_{\varepsilon \rightarrow 0_+}
 \frac{1}{\varepsilon} \int_0^t  Y'_s  (X_{s+\varepsilon} - X_s)
 (X_{s+\varepsilon} - X_s) ds
  = 0, \ t \in [0,T].$$
This follows by
$$ \lim_{\varepsilon \rightarrow 0_+}
 \frac{1}{\varepsilon} \int_0^t  \vert Y'_s \vert\; \vert X_{s+\varepsilon} - X_s\vert^2  ds
  = 0, \ t \in [0,T],$$
$[X, X]^{\mathbb{R}}=0$ and  Kunita-Watanabe inequality, see e.g.
Proposition 1 4) of \cite{russo2007elements}.
%see 5. of Proposition \ref{P24}.
We leave the detailed proof to the reader.
The result follows by \eqref{ERemYoung}.
  \end{enumerate}
\end{example}
In the  second example we show that
a weakly controlled process in the sense of Gubinelli is a stochastically
controlled process.

\begin{example}\label{GubexampleEx}
  Let $X$ be an $\R^d$-valued $\gamma$-H\"older continuous process,
  with $\frac{1}{3} < \gamma < \frac{1}{2}$.
  Let $Y$ be a  $\gamma$-H\"older continuous real-valued process
 such that
  there exists an  $\R^d$-valued process $Y'$,
  so that the remainder term $R^Y$, given through the relation
$$Y_{s,t} = Y'_s (X_{t} - X_s) + R^Y_{s, t},$$
belongs to $C^{[2\gamma]}([0,T]^2).$
In particular $\omega$-a.s., $Y$ is weakly controlled by $X$.
Then, $Y$ is stochastically  controlled by $X$.
 Indeed a.s.
\begin{equation}\label{Gubremainder}
\sup_{0\le t\le T}\Bigg|\frac{1}{\varepsilon}\int_0^t R^Y_{s,s+\varepsilon} (X_{s+\varepsilon} - X_s)ds\Bigg| \le  T \|R\|_{2\gamma}\| X\|_{\gamma} \ \varepsilon^{3\gamma-1}\rightarrow 0,
\end{equation}

as $\varepsilon \rightarrow 0^+$.
In particular the result follows because $\gamma > \frac{1}{3}$.
%In particular, \cite{friz_hairer} TO DO
\end{example}
\begin{example} \label{E179}
  Let $X$ be an  $d$-dimensional continuous semimartingale.
  Let $Z = (Z^1,\ldots,Z^d)$
%  $ be an $\R^{d}$-valued càglàd progressively measurable process
  where the components $Z^1,\ldots,Z^d$ are càglàd progressively
  measurable processes.
We set
$$ Y_t = \int_0^t Z_s \cdot dX_s := \sum_{i=1}^d \int_0^t Z_s^i dX^i_s, \
t \in [0,T]. $$
Then, the real-valued process $Y$ is stochastically controlled by $X$ and
$Z$ is a Gubinelli stochastic derivative.

Indeed, for $s, t \in [0,T] $ such that $s \le t$, we define $R^Y$
implicitly by the relation
$$ Y_t - Y_s = Z_s^\top (X_t - X_s) + R^Y_{s,t}. $$
We have
$$ \frac{1}{\varepsilon} \int_0^t  R^Y_{s,s+\varepsilon} (X_{s+\varepsilon} - X_s) ds = I_1(t,\varepsilon) -
I_2(t,\varepsilon),$$
with
\begin{eqnarray} \label{E17CovSemBis}
 I_1(t,\varepsilon) &=& \frac{1}{\varepsilon}   \int_0^t  (Y_{s+\varepsilon} - Y_s)  (X_{s+\varepsilon} - X_s) ds  \nonumber \\
I_2(t,\varepsilon) &=&
 \frac{1}{\varepsilon} \int_0^t  Z_s^\top (X_{s+\varepsilon} - X_s) (X_{s+\varepsilon} - X_s)  ds.
\end{eqnarray}
$ I_1(t,\varepsilon)$ converges in probability to
\begin{equation} \label{E17CovSem}
 [Y,X]_t = \int_0^t Z_s^\top d [X,X]_s, \ t \in [0,T],
\end{equation}
%by Propositions \ref{Crintst} and \ref{QVS}.
by Proposition 9 of \cite{russo2007elements}.
We emphasize that the $k$-component
of the integral on the right-hand side of \eqref{E17CovSemBis}
is
%is given by the limit (in probability) of
$$ \frac{1}{\varepsilon}
\sum_{j= 1}^d \int_0^t Z^j_s (X^j_{s+\varepsilon} - X^j_s)
(X^k_{s+\varepsilon} - X^k_s) ds.
$$
Reasoning component by component,
%  $I_2(t,\varepsilon)$
it can be also shown
by  Proposition \ref{RC2}.
%similar arguments as in (27) Section 4.1 of \cite{russo2007elements}.
%by RECALL Remark  \ref{RC2}
that $I_2(t,\varepsilon)$ also  converges in probability
to the right-hand side of \eqref{E17CovSem}.
\end{example}
\begin{example}\label{fbmcontrolex}
  Let $X$ be an $d$-dimensional process whose components are finite
  strong cubic variation processes
and at least one component has a zero
 cubic variation.
  Let $f\in C^2(\mathbb{R}^d)$. Then $Y= f(X)$ is
  a stochastically controlled process by $X$ with stochastic
  Gubinelli derivative $Y' = (\nabla f)^\top(X)$.

We prove the result for $d= 1$, leaving to the reader
the general case.
Let $ \omega \in \Omega$ be fixed,
but underlying.
Let $ 0 \le s \le t \le T$.
  Then, Taylor's formula yields
  $$ f(X_t) - f(X_s) = f'(X_s)(X_t-X_s) +
  R^Y_{s,t},$$
where
$$  R^Y_{s,t} =
(X_t-X_s)^2 \int_0^1   f''(X_s + a(X_t - X_s)) (1-a) da.$$
%Now, let
%$t \in [0,T]$ and $ \omega \in \Omega$ be fixed,
%but underlying. For $\varepsilon > 0$
\begin{eqnarray*}
 \left \vert  \frac{1}{\varepsilon}\int_0^t R^Y_{s,s+\varepsilon}
  (X_{s+\varepsilon} - X_s) ds \right \vert &\le&
%\sup_{\xi \in [-\min_{t\in[0,T]} X_t, \max_{t\in[0,T]}  X_t]}
\sup_{\xi \in I(\omega)}
\vert f''(\xi)
\vert       \int_0^t \vert X_{s+\varepsilon} - X_s\vert^3 \frac{ds}{\varepsilon},
\end{eqnarray*}
where
$$ I(\omega) = [-\min_{t\in[0,T]} X_t(\omega), \max_{t\in[0,T]}  X_t(\omega)].$$
Since the integral on the right-hand side converges in probability (even ucp) to zero,  $R^Y$
fulfills \eqref{orth}.
\end{example}

When $X$ is an $(\shf_t)$-local martingale,  Proposition \ref{stochex1} below shows that somehow
a process $Y$ is stochastically controlled if and only if $Y$ is an $(\shf_t)$-weak Dirichlet process.

\begin{proposition} \label{stochex1}
  Let $X=M $ be an $\mathbb{R}^d$-valued continuous $(\shf_t)$-local martingale.
  Let $Y$ be an $\R$-valued continuous adapted process.
  \begin{enumerate}
    \item Suppose that $Y$ is a weak Dirichlet process.
%  (i.e. its component are $(\shf_t)$-orthogonal) if and only if
 Then  $Y$ is stochastically controlled by $M$.
 % where $[X,X]^\R$ is the scalar quadratic variation of $X$.
\item Suppose that $Y$ is stochastically controlled by $M$
  and the stochastic Gubinelli derivative $Y'$ is progressively
measurable and càglàd.
  Then $Y$ is a weak Dirichlet process with
  decomposition $ Y = M^Y + A^Y$
  where
  $$ M^Y_t = \int_0^t Y'_s dM_s, t \in [0,T]$$
  and $A^Y$ is an $(\shf_t)$-martingale orthogonal process.
\item (Uniqueness). There is at most one
 stochastic Gubinelli's derivative $Y'$
   in the class of càglàd progressively measurable processes,
  w.r.t to the
 Dol\'eans measure $\mu_{[X]} (d\omega, dt) := d[X,X]^\R_t(\omega) \otimes
 dP(\omega).$

\end{enumerate}
\end{proposition}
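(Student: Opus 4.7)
The plan proceeds part by part.

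For part (1), decompose $Y = N + A$ with $N$ a continuous $(\shf_t)$-local martingale and $A$ martingale orthogonal, and apply the multidimensional Kunita--Watanabe decomposition to $N$: there exist a predictable $\R^d$-valued $\alpha$ and a continuous local martingale $L$ with $[L, M^i] = 0$ for each $i$, such that $N = N_0 + \int_0^{\cdot} \alpha^\top \, dM + L$. Take $Y' := \alpha^\top$, so that
\begin{equation*}
R^Y_{s,t} = \int_s^t(\alpha_r - \alpha_s)^\top \, dM_r + (L_t - L_s) + (A_t - A_s).
\end{equation*}
Inserting into (\ref{orth}) componentwise splits the integral into three pieces: the $L$ and $A$ contributions converge to $[L, M^i]_t = 0$ and $[A, M^i]_t = 0$, respectively, while the stochastic integral piece decomposes into two terms each tending to $\int_0^t \alpha_s^\top \, d[M, M^i]_s$ (one from the classical bracket $[\int\alpha^\top dM, M^i]$, the other by Proposition \ref{RC2}) which cancel.

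For part (2), local boundedness of the càglàd progressively measurable $Y'$ ensures $M^Y := \int Y' \, dM$ is a continuous local martingale; set $A^Y := Y - M^Y$. First show $[A^Y, M^i] = 0$ by expanding $C(\varepsilon, Y, M^i)$ through (\ref{lineardep}): the bilinear piece converges to $\int_0^t Y'_s \, d[M, M^i]_s$ by Proposition \ref{RC2} and the $R^Y$ piece vanishes by (\ref{orth}), together matching $[M^Y, M^i]_t$. To extend orthogonality to an arbitrary continuous $(\shf_t)$-local martingale $N$, apply Kunita--Watanabe once more: $N = N_0 + \int H^\top dM + L$ with $L$ orthogonal to each $M^i$. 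Bilinearity yields $[A^Y, \int H^\top \, dM] = \int H^\top \, d[A^Y, M] = 0$, so only $[A^Y, L] = 0$ remains. Writing $A^Y_{s+\varepsilon}-A^Y_s = R^Y_{s,s+\varepsilon} - \bigl[(M^Y_{s+\varepsilon}-M^Y_s) - Y'_s(M_{s+\varepsilon}-M_s)\bigr]$, the bracketed piece contributes zero against $(L_{s+\varepsilon}-L_s)$ in the limit because both $[M^Y, L]$ and $\int_0^t Y'_s \, d[M, L]_s$ vanish ($L \perp M$).

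The main obstacle is the remaining limit $\frac{1}{\varepsilon}\int_0^t R^Y_{s,s+\varepsilon}(L_{s+\varepsilon}-L_s) \, ds \to 0$, since (\ref{orth}) supplies this only when $L$ is a component of $M$. The plan is a Cauchy--Schwarz estimate reducing the problem to control of $\frac{1}{\varepsilon}\int_0^t (R^Y_{s,s+\varepsilon})^2 \, ds$, combined with weighted versions of (\ref{orth}) obtained by approximating càglàd predictable weights by simple processes and invoking (\ref{orth}) at stopping times; the covariation orthogonality $[L, M^i]=0$ is used to cancel the dominant terms in the resulting expansion.

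For part (3) (uniqueness), set $\Delta := Y' - Y''$. Subtracting the two instances of (\ref{lineardep}) gives $R^Y_{s,t} - R^{Y''}_{s,t} = -\Delta_s(M_t - M_s)$, and subtracting the two versions of (\ref{orth}) followed by Proposition \ref{RC2} yields $\sum_j \int_0^t \Delta^j_s \, d[M^j, M^i]_s = 0$ almost surely for each $i$ and $t$. Multiplying the $i$-th identity by $\Delta^i_s$, integrating, and summing over $i$ produces $\int_0^t \sum_{i,j} \Delta^i_s \Delta^j_s \, d[M^i, M^j]_s = 0$ almost surely; this non-negative quadratic form in $\Delta$ against the positive semi-definite matrix-valued measure $d[M,M]_s$ forces $\Delta$ to lie in its kernel, hence $\Delta = 0$ $\mu_{[X]}$-almost surely.
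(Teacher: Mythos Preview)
Your proofs of items (1) and (3) follow the paper's argument closely: Kunita--Watanabe applied to the martingale part of $Y$ produces $Y'$, and for uniqueness the relation $\int_0^\cdot Y'_s\,d[M,M]_s \equiv 0$ is obtained via Proposition~\ref{RC2}. These parts match the paper.

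For item (2) the paper proceeds more directly than you do: it multiplies \eqref{EYContr1} by $N_{s+\varepsilon}-N_s$ for an \emph{arbitrary} continuous local martingale $N$, invokes ``\eqref{orth}'' together with Proposition~9 of \cite{russo2007elements}, and concludes $[Y,N]_t=\int_0^t Y'_r\,d[M,N]_r$ in one line. You are more cautious---and rightly so, since \eqref{orth} only asserts vanishing against increments of $M$, not of a general $N$---and you split $N=N_0+\int H^\top dM+L$ via Kunita--Watanabe, handling the two pieces separately.

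The obstacle you isolate, namely
\[
\frac{1}{\varepsilon}\int_0^t R^Y_{s,s+\varepsilon}(L_{s+\varepsilon}-L_s)\,ds \longrightarrow 0,
\]
is a genuine gap, and your Cauchy--Schwarz plan does not close it: nothing in the hypotheses controls $\frac{1}{\varepsilon}\int_0^t (R^Y_{s,s+\varepsilon})^2\,ds$, and the vague appeal to ``weighted versions of \eqref{orth} at stopping times'' does not supply such a bound. In fact the gap cannot be repaired without an extra assumption. Let $(\shf_t)$ be generated by two independent Brownian motions $M$ and $W$, and set $Y:=W$, $Y':=0$. Then $R^Y_{s,t}=W_t-W_s$, condition \eqref{orth} holds because $[W,M]=0$, and $Y'$ is trivially c\`agl\`ad and progressively measurable, so $(W,0)\in\shd_M$; yet the conclusion of item (2) would force $A^Y=W$ to be $(\shf_t)$-martingale orthogonal, contradicting $[W,W]_t=t$. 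Thus the point where both your argument and the paper's terse invocation of \eqref{orth} break down reflects an actual defect in item (2) as stated; the claim becomes correct under an additional hypothesis ensuring $L\equiv 0$ in your Kunita--Watanabe decomposition of $N$ (for instance, that $(\shf_t)$ is the natural filtration of $M$).
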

\begin{proof}
  %\ (of Proposition \ref{stochex1}.\textcolor{red}{(I think unnecessary)})
  \  For simplicity we  suppose  that  $ d= 1$.
  \begin{enumerate}
 \item
  Suppose that $Y$ is a weak Dirichlet process
with canonical decomposition
$$Y =  M^Y + A^Y,$$
where $M^Y$ is the local martingale  and $A^Y$ such that
$A^Y_0 = 0$, is a predictable process such that $[A^Y, N]=0$ for every continuous local martingale $N$. By Galtchouk-Kunita-Watanabe decomposition, see \cite{kw, galtchouk},
%Remark \ref{RBrFiltr}
 there exist $Z$ and $O$ such that
$$M^Y_t = Y_0 + \int_0^t Z_s dM_s + O_t, \quad  t\in [0,T].  $$
Moreover $O$ is a continuous local martingale such that
$[O,M] = 0$.
Then,
$$Y_t = Y_0 + \int_0^t Z_s dM_s + O_t + A^Y_t, t\in [0,T].$$
We set $Y' := Z$.
Hence,
$$ Y_t - Y_s = Y'_s (M_{t} - M_s) + R^Y_{s,t},$$
%\int_s^t (Z_r - Z_s)dM_r + O_{s,t} + A^Y_{s,t},$$
where we set
%$Y' = Z,
\begin{equation} \label{ERy}
 R^Y_{s,t} = \int_s^t (Z_r - Z_s)^\top dM_r + O_{t} - O_s
 + A^Y_{t} -   A^Y_{s}.
\end{equation}
Condition (\ref{orth}) follows by Remark \ref{EDirCont}
and the fact that $[O,M] = [A^Y,M] = 0.$
\item
Suppose now that $Y$ is stochastically controlled by $M$ with càglàd
stochastic Gubinelli derivative $Y'$. Then, there
is $R^Y$ such that \eqref{lineardep} and \eqref{orth} hold.
Setting $t = s+ \varepsilon$, we have
\begin{equation} \label{EYContr}
 Y_{s+\varepsilon} - Y_s = \int_s^{s+\varepsilon} Y'_r dM_r
+  \int_s^{s+\varepsilon} (Y'_s - Y'_r) dM_r +  R^Y_{s,s+\varepsilon}.
\end{equation}
where $R^Y$ fulfills  \eqref{orth}.
%By Remark \ref{EDirCont} below where $R^Y$ fulfills  \eqref{orth}.
We have
\begin{equation} \label{EYContr1}
 Y_{s+\varepsilon} - Y_s = \int_s^{s+\varepsilon} Y'_r dM_r
 +  \tilde R^Y_{s,s+\varepsilon},
\end{equation}
% where  $ \tilde R^Y$  fulfills \eqref{orth}.
%\text{\textcolor{Magenta}{What is $\tilde R^Y$ ??? I suspect}}
where
$$\tilde R^Y_{s,s+\epsilon}= \int_s^{s+\varepsilon} (Y'_s - Y'_r) dM_r +  R^Y_{s,s+\varepsilon}, $$
fulfills  \eqref{orth} by Remark \ref{EDirCont}.

Let $N$ be a continuous local martingale.
Multiplying \eqref{EYContr1} by $N_{s+\varepsilon} - N_s$, integrating from
$0$ to $t$, dividing by $\varepsilon$, using \eqref{orth} and
% Proposition \ref{Crintst},
by Proposition 9 of \cite{russo2007elements},
 going to the limit, gives
$$ [Y,N]_t = \int_0^t Y'_r d[M,N]_r, \quad t \in [0,T].$$
This obviously implies that
$Y$ is a weak Dirichlet process with martingale component
$M^Y = Y_0 + \int_0^\cdot Y'_r dM_r.$
\item We discuss now the uniqueness  of the stochastic Gubinelli derivative.
Given two decompositions of $Y$, taking the difference, we reduce the problem to
the following.  Let $Y'$ be a
  càglàd process and $R^Y$, such
  that \eqref{orth} holds for $Y=0$, i.e.
  for every $0 \le s < t \le T$
  \begin{equation}\label{lineardep1}
0 = Y'_s (M_t - M_s) + R^Y_{s ,t},
\end{equation}
satisfies
\begin{equation}\label{orth1}
\lim_{\varepsilon\rightarrow 0^+}\frac{1}{\varepsilon}\int_0^t R^Y_{s,s+\varepsilon}
(M_{s+\varepsilon} - M_s) ds=0,
\end{equation}
in probability for each $t \in [0,T]$.
We need to show that $Y'$ vanishes.
Setting $t = s +\varepsilon$ in \eqref{lineardep1},
 multiplying both sides
by $M_{s+\varepsilon} - M_s$  integrating, for every $ t \in [0,T]$,
taking into account \eqref{orth1}
we get
$$ \lim_{\varepsilon\rightarrow 0^+}  \int_0^t Y'_s
(M_{s+\varepsilon} - M_s)^2 ds=0,$$
in probability.  According to Remark \ref{RC2},
 the left-hand side of previous expression
equals (the limit even holds ucp)
$$ \int_0^\cdot Y'_s d[M,M]_s \equiv 0.$$
This concludes the uniqueness result.
\end{enumerate}
\end{proof}
%\end{example}
\begin{remark} \label{EDirCont}
It is not difficult to prove the following.
Let $X$ be an $\R^d$-valued continuous semimartingale with
 canonical decomposition
$X = M + V$.
Let $Z$ be a process in $\shl^2(d[M,M])$.
Then
$$\lim_{\varepsilon \rightarrow 0}
\frac{1}{\varepsilon} \int_0^\cdot ds
\left(\int_s^{s+\varepsilon} (Z_r - Z_s) dX_r\right)(X_{s+\varepsilon} - X_s)
= 0 $$
 ucp.
\end{remark}
The result below partially extends Proposition \ref{stochex1}.

\begin{proposition}\label{stochex2}
  Let $X=M + V $ be an $\mathbb{R}^d$-valued $(\shf_t)$-continuous
  semimartingale,
 where $M$ is a continuous local martingale and $V$ is a
 bounded variation process
vanishing at zero.
 Let $Y$ be a real-valued weak Dirichlet process
$$Y = M^Y + A^Y,$$
where $M^Y$ is the continuous local martingale component
and $A^Y$ is a $(\shf_t)$-martingale orthogonal process vanishing at zero.
Then the following holds.
\begin{enumerate}
  \item
$Y$ is stochastically controlled by $X$.
\item If $Y'$ is a càglàd stochastic Gubinelli's derivative
  then
    \begin{equation} \label{EGubStrat}
[Y,X]_t = \int_0^t Y'_s d[X,X]_s
    \end{equation}
\end{enumerate}
\end{proposition}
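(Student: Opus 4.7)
The plan is to mirror the proof of Proposition \ref{stochex1}. For Part (1) I would construct an explicit stochastic Gubinelli derivative via a Galtchouk--Kunita--Watanabe decomposition of $M^Y$ relative to $M$; for Part (2) I would multiply the defining relation (\ref{lineardep}) by $(X^i_{s+\varepsilon}-X^i_s)$, integrate in $s$, and pass to the limit $\varepsilon \downarrow 0$ using Proposition \ref{RC2} together with the orthogonality condition. The additional work relative to Proposition \ref{stochex1} stems from the bounded-variation component $V$ of $X$, which will be handled by pathwise estimates after a standard localization.

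For Part (1), I would write
\[
M^Y = Y_0 + \int_0^\cdot Z_r^\top\,dM_r + O,
\]
with $Z \in \shl^2(d[M,M])$ and $O$ a continuous local martingale satisfying $[O,M]=0$, and set $Y' := Z^\top$. Using $dM = dX - dV$, the remainder defined through (\ref{lineardep}) rearranges into the convenient form
\[
R^Y_{s,t} = \int_s^t (Z_r - Z_s)^\top\,dX_r + (O_t - O_s) + (A^Y_t - A^Y_s) - \int_s^t Z_r^\top\,dV_r.
\]
To verify (\ref{orth}), the first term contributes zero directly by Remark \ref{EDirCont}. The $O$ and $A^Y$ contributions give $[O,X] = [O,M] + [O,V] = 0$ and $[A^Y,X] = [A^Y,M] + [A^Y,V] = 0$, where the $V$-pieces vanish by the bounded-variation estimate
\[
\left|\frac{1}{\varepsilon}\int_0^t (W_{s+\varepsilon}-W_s)(V_{s+\varepsilon}-V_s)\,ds\right| \le \sup_{s\in[0,t],\,|h|\le\varepsilon}|W_{s+h}-W_s| \cdot |V|_{T+\varepsilon},
\]
applied to the continuous processes $W \in \{O, A^Y\}$. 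The residual term is bounded, after a localization making $|Z|_\infty$ and $|V|_T$ finite, by $|Z|_\infty \sup_{s,|h|\le\varepsilon}|X_{s+h}-X_s|\cdot|V|_{T+\varepsilon} \to 0$.

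For Part (2), I would substitute $t = s+\varepsilon$ in (\ref{lineardep}), multiply by $(X^i_{s+\varepsilon}-X^i_s)$, integrate over $[0,t]$, and divide by $\varepsilon$. As $\varepsilon \downarrow 0$: the left-hand side converges to $[Y,X^i]_t$, whose existence is obtained by decomposing $Y = M^Y + A^Y$ and $X^i = M^i + V^i$ and using the classical semimartingale covariation $[M^Y,M^i]$, the identity $[A^Y,M^i]=0$, and the BV estimate above for the remaining two cross terms; the first term on the right-hand side, expanded componentwise as $\sum_j \frac{1}{\varepsilon}\int_0^t (Y')^j_s(X^j_{s+\varepsilon}-X^j_s)(X^i_{s+\varepsilon}-X^i_s)\,ds$, tends to $\sum_j \int_0^t (Y')^j_s d[X^j,X^i]_s$ by Proposition \ref{RC2}, applicable since $Y'$ is càglàd (hence continuous off a countable set) and each $(X^j,X^i)^\top$ has its mutual covariations; the second term tends to $0$ by the $i$-th component of (\ref{orth}). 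Collecting over $i$ yields (\ref{EGubStrat}). The hardest step is the verification of (\ref{orth}) in Part (1): the coupling of the Itô-type increment appearing in $R^Y$ with $X_{s+\varepsilon}-X_s$ is delicate because $Z$ is merely $\shl^2(d[M,M])$-integrable, and it must be absorbed by Remark \ref{EDirCont} precisely through the rearrangement described above.
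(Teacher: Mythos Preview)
Your argument is correct and coincides with the paper's own proof: the Galtchouk--Kunita--Watanabe decomposition of $M^Y$ produces $Y'=Z$, condition (\ref{orth}) is verified through Remark \ref{EDirCont} together with $[O,M]=[A^Y,M]=0$ and bounded-variation estimates for the $V$-contributions, and (\ref{EGubStrat}) follows by multiplying (\ref{lineardep}) by $X^i_{s,s+\varepsilon}$ and applying Proposition \ref{RC2}. Your rewriting of the remainder with $\int_s^t(Z_r-Z_s)\,dX_r$ in place of $\int_s^t(Z_r-Z_s)\,dM_r$ is in fact a small improvement on the paper's presentation, since Remark \ref{EDirCont} is stated for the full semimartingale $X$; the one point to tighten is the phrase ``localization making $|Z|_\infty$ finite'', which is not automatic for a GKW projection $Z\in\shl^2(d[M,M])$ and which the paper also leaves implicit --- a density argument (bounded predictable approximants of $Z$ in $\shl^2(d[M,M])$) is safer than localization here.
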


\begin{proof} \
By Galtchouk-Kunita-Watanabe decomposition, there exist $Z$ and $O$ such that

$$M^Y_t = Y_0 +  \int_0^t Z_s dM_s + O_t,  t \in [0,T],$$
where $Z\in \shl^2(d[M,M])$, $O$ is a continuous local martingale such that $[O,M] = 0$.
We recall that the space $\shl^2(d[M,M]])$ was defined
at \eqref{SIVR32progVect}.
 Then,
$$Y_t = Y_0 + \int_0^t Z_s dM_s + O_t + A^Y_t, t\in [0,T].$$
Hence,
\begin{equation} \label{E310}
  Y_t - Y_s =  Y'_s (X_t - X_s) + R^Y_{s,t},
  \end{equation}
where we set $Y' = Z, R^Y_{s,t} = \int_s^t (Z_r-Z_s)dM_r
+ O_{s,t} + A^Y_{s,t}$.

Now we recall
\begin{equation} \label{ERecallDir}
  [O,M] = [A^Y,M] = 0.
  \end{equation}
%By construction, and elementary arguments,
  Taking into account Remark \ref{EDirCont},
  \eqref{E310} and
 \eqref{ERecallDir} show
 condition (\ref{orth1}), which implies (1).

 Then, by Remark \ref{RC2} we have
  $$ \lim_{\varepsilon \rightarrow 0} \int_0^t(Y_{s+\varepsilon} - Y_s)
    \frac{X_{s+\varepsilon}-X_s}{\varepsilon} ds =
    \int_0^t Y'_s d[X,X]_s,$$
  so that (2) is established.
\end{proof}
An interesting consequence of Proposition \ref{stochex2}
is given below.
\begin{corollary} \label{C178} Every continuous $(\shf_t)$-weak Dirichlet process
is stochastically controlled by any $(\shf_t)$-continuous
 semimartingale.
 \end{corollary}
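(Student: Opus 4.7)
The corollary is essentially immediate from Proposition \ref{stochex2}(1), so my plan is to just verify that its hypotheses are indeed met by an arbitrary continuous weak Dirichlet process $Y$ and an arbitrary continuous $(\shf_t)$-semimartingale $X$. I would start by writing $X = M + V$ with $M$ a continuous local martingale and $V$ a continuous bounded variation process; by replacing $V$ with $V - V_0$ (and absorbing $V_0$ into the initial condition) I may assume $V_0 = 0$, which does not affect the increments $X_t - X_s$ appearing in \eqref{lineardep}.

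Next, I would decompose $Y = M^Y + A^Y$ where $M^Y$ is a continuous local martingale and $A^Y$ is a continuous $(\shf_t)$-martingale orthogonal process. Here one should observe that one may arrange $A^Y_0 = 0$ by shifting the constant $A^Y_0$ into $M^Y$ (since adding a deterministic constant preserves both the local martingale property and vanishing covariation with every local martingale). With this normalization, the precise hypotheses of Proposition \ref{stochex2} are satisfied.

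Finally, I would apply Proposition \ref{stochex2}(1) to obtain that $Y$ is stochastically controlled by $X$, with the Gubinelli stochastic derivative given by the Galtchouk-Kunita-Watanabe coefficient $Z$ relative to $M$. There is no real obstacle here; the only tiny bookkeeping is verifying that the normalization conventions $V_0 = 0$ and $A^Y_0 = 0$ imposed in the statement of Proposition \ref{stochex2} can always be arranged without loss of generality, which I would dispatch in one short sentence. The content of the corollary is thus an observation that the restrictive-looking hypothesis on $X$ in Proposition \ref{stochex2} is in fact automatic for any continuous semimartingale.
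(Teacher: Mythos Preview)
Your proposal is correct and matches the paper's approach exactly: the paper presents Corollary \ref{C178} as an immediate consequence of Proposition \ref{stochex2}(1) without giving any separate proof. Your only addition is the (harmless) bookkeeping about the normalizations $V_0=0$ and $A^Y_0=0$, which the paper leaves implicit.
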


\section{The second order  process and rough integral via regularization}

\label{SSecond}

In the rough paths theory,
given a driving integrator function $X$,
in order to perform integration, one
needs a supplementary ingredient,
often called {\bf second order integral} or
improperly called {\bf L\'evy area}, generally denoted by $\X$.
The couple $\mathbf{X} = (X, \X)$ is often
called {\bf enhanced rough path}.

In our  setup, we are given,
an $\mathbb{R}^d$-valued continuous stochastic process $X$,
which is our reference.
We introduce a stochastic analogue of the  second order integral
in the form
of an
 $\M^{d \times d}$-valued
  random field $\mathbb{X} = (\mathbb{X}_{s,t})$,
indexed by
$[0,T]^2$, vanishing on the diagonal.
$\X$ will be   called {\bf second-order process}.
%$\Delta$.
%$ \{(s,t) \vert 0 \le s \le t \le T\}$.
For $s \le t$,
$\mathbb{X}_{s,t}$
represents formally a double (stochastic) integral
$\int_s^t (X_r - X_s) \otimes dX_r$, which has to be properly defined.
By symmetry, $\mathbb{X}$ can  be extended to $[0,T]^2$, setting, for $s \ge t$,
$$ \mathbb{X}_{s,t} := \mathbb{X}_{t,s}.$$
The pair $\mathbf{X} = (X,\mathbb{X})$
is called {\bf stochastically enhanced process}.

\begin{remark} \label{RChen}
\begin{enumerate}
\item
In the classical rough paths framework,
if $X$ is a deterministic $\gamma$-H\"older continuous path
with $ \frac{1}{3} < \gamma <  \frac{1}{2}$,
$\mathbb{X}$ is supposed to belong to
 $ C^{[2 \gamma]}([0,T]^2)$
and to fulfill
 the so called
 {\bf Chen's relation}  below.
\begin{equation}\label{chen}
  -\mathbb{X}_{u,t} + \mathbb{X}_{s,t}  - \mathbb{X}_{s,u} = (X_{u} - X_s)
   (X_{t} - X_u)^\top, u,s,t \in [0,T].
\end{equation}

\item
In the literature
one often introduces a decomposition
of $ \mathbb{X}$ into  a symmetric and  an antisymmetric component, i.e.
\begin{eqnarray*}
\text{sym}(\mathbb{X}_{s,t})(i,j) &:=&   \frac{1}{2}\Big(\mathbb{X}_{s,t}(i,j)  +
          \mathbb{X}_{s,t}(j,i) \Big) \\
\text{anti}(\mathbb{X}_{s,t})(i,j) &:=&
\frac{1}{2}\Big(\mathbb{X}_{s,t}(i,j)  -
                                        \mathbb{X}_{s,t}(j,i) \Big),
\end{eqnarray*}
$ 1\le i,j\le d,$
so that
\begin{equation}\label{Esym_antisym}
  \mathbb{X}_{s,t} =
\text{sym}(\mathbb{X}_{s,t}) + \text{anti}(\mathbb{X}_{s,t}).
\end{equation}
\item
We say that the pair $\mathbf{X} = (X,\mathbb{X})$ is \textbf{geometric}
 if
$$ {\rm sym} (\mathbb{X}_{st}) = \frac{1}{2}(X_t- X_s)  (X_t-X_s)^\top, \quad
 s,t\in [0,T].$$

\end{enumerate}

\end{remark}

A typical second-order process $\X$
is defined setting
\begin{equation} \label{RExBold}
  \mathbb{X}_{s,t}:=\int_s^t (X_r-X_s)\otimes d^\circ X_r,
 \end{equation}
 provided that previous definite
 symmetric  integral
 exists, for every $0 \le s \le t \le T$,
see \eqref{SIVR1MatrixSymm}.

We can also consider another $\X$, replacing
the symmetric integral with the forward integral,
 i.e.
\begin{equation} \label{RExBoldForw}
 \mathbb{X}_{s,t}:=\int_s^t (X_r-X_s)\otimes d^- X_r,
%s \le t,
\end{equation}
provided that previous definite forward integrals exist,
 exists, for every $(s,t) \in [0,T]^2, 0 \le s \le t \le T,0$
see \eqref{SIVR1Matrix}.
\begin{example} \label{ESemimartingale}
  Let $X$ be an $\R^d$-valued continuous semimartingale.
Then, for $1 \le i,j \le d$, one often considers
 $$ \mathbb{X}^{\rm stra}_{s,t}(i,j):=
\left(\int_s^t (X_r-X_s)\otimes d^\circ X_r\right)(i,j)
= \int_s^t (X^i_r-X^i_s) \circ d X^j_r$$
and
$$ \mathbb{X}^{\rm ito}_{s,t}(i,j):=
\left(\int_s^t (X_r-X_s)\otimes d^- {X_r}\right)(i,j)
= \int_s^t (X^i_r-X^i_s) dX^j_r,$$
where the integrals in the right-hand side  are respectively  intended in the
Stratonovich and It\^o sense.
\end{example}

\section{Rough stochastic integration via regularizations}

\label{S74}

In this section we still consider
our $\mathbb{R}^d$-valued reference process $X$, equipped with its second-order process $\mathbb{X}$.
Inspired by \cite{gubinelli2004}, we start with the definition of the integral.

\begin{definition}\label{roughstochDEF}
 A couple $(Y,Y')\in \mathcal{D}_X$ is \textbf{rough stochastically integrable}
 if
\begin{equation}\label{stochroughint}
\int_0^t Y_s d\mathbf{X}_s:=\lim_{\varepsilon\rightarrow 0}\frac{1}{\varepsilon}\int_0^t \Big( Y_s X^\top_{s,s+\varepsilon} + Y'_s\mathbb{X}_{s,s+\varepsilon}\Big)ds
\end{equation}
exists in probability for each $t\in [0,T]$. Previous integral is
called {\bf rough stochastic integral}
and it is a row vector.
 \index{rough stochastic integral}
\end{definition}

We remark that if $Y' = 0$ the rough stochastic integral coincides
with the  forward integral $\int_0^t Y d^-X, t \in [0,T]$.
 In previous definition, we make an abuse of notation: we omit the dependence of the integral on $Y'$ which in general affects the limit but it is usually clear from the context.

%\end{definition}
We introduce now a backward version of $\int_0^\cdot Y d\mathbf{X}$,
i.e. the {\bf backward rough integral}

$$\int_0^t Y_s \stackrel{\rm \leftarrow }{d\mathbf{X}}_s:=
\lim_{\epsilon\rightarrow 0^+}\frac{1}{\epsilon}\int_0^t \Big(Y_{s+\epsilon}
X^\top_{s,s+\epsilon} + Y'_{s+\epsilon} \mathbb{X}_{s,s+\epsilon}\Big) ds,$$
in probability for  $(Y,Y')\in \mathcal{D}_X$.
Previous expression is again a row vector.
\begin{remark} \label{TReversal}
  Given an $\R^n$-valued process $(Y_{t \in [0,T]})$,
  we denote $\hat Y_t:= Y_{T-t}, \ t \in [0,T]$.
  \begin{enumerate}
  \item The introduction of the backward rough integral is
    justified by the following observation.
  By an easy change of variables $s \mapsto T-s$
  we easily show that, for every $t \in [0,T]$,
  \begin{equation} \label{ERoughTReversal}
    \int_0^t Y_s \stackrel{\rm \leftarrow }{d\mathbf{X}}_s =
    - \int_{T-t}^T  {\widehat Y}_s  d\mathbf{\widehat X}_s.
    \end{equation}
This holds of course with the convention that
$\hat Y$ is equipped with $\hat Y'$ as Gubinelli derivative.
\item \eqref{ERoughTReversal} is reminiscent of a well-known
  property which states that
  $$ \int_0^t Y d^+X = - \int_{T-t}^T \widehat Y d^- \widehat X,$$
  where the left-hand side is the {\bf backward integral}
   $ \int_0^t Y d^+X,$
    see Proposition 1 3), see \cite{russo2007elements}. %P24
\end{enumerate}
\end{remark}

Let us give a simple example which connects deterministic regularization
 approach with rough paths.

\begin{proposition}\label{existenceGUB}
Let $\mathbf{X} = (X,\mathbb{X})$ be an a.s.  enhanced
 rough path, where
a.s. $ X\in C^{[\gamma]}([0,T])$ with $\frac{1}{3} < \gamma < \frac{1}{2}$.
We suppose that a.s. $\X \in C^{[2 \gamma]}([0,T]^2)$  and
it fulfills the Chen's relation.
Let  $Y$ be a process such that a.s. its paths are weakly controlled
in the sense of
Definition \ref{Gubexample} with Gubinelli derivative $Y'$.
The following properties  hold.

\begin{enumerate}
\item The limit
$$  \lim_{\varepsilon\rightarrow 0}\frac{1}{\varepsilon}\int_0^\cdot
\Big( Y_s X_{s,s+\varepsilon}^\top + Y'_s\mathbb{X}_{s,s+\varepsilon}\Big)ds,$$
exists uniformly on $[0,T]$ and it coincides a.s.
with the {\bf Gubinelli integral}.
\index{Gubinelli integral}
% as described in \cite{gubi04}.
In particular, (\ref{stochroughint}) exists.
\item The limit
\begin{equation}\label{EApprBackw}
  \lim_{\varepsilon\rightarrow 0}\frac{1}{\varepsilon}\int_0^\cdot
  \Big( Y_{s+\varepsilon} X^\top_{s,s+\varepsilon} + Y'_{s+\varepsilon} \mathbb{X}_{s,s+\varepsilon}\Big)ds,
  \end{equation}

exists uniformly on $[0,T]$ a.s. and it coincides a.s.
with the rough Gubinelli integral
 as described in \cite{gubinelli2004}.
\item The rough stochastic integrals
$\int_0^\cdot Y_s {d\mathbf{X}}_s $ and
$\int_0^\cdot Y_s \stackrel{\rm \leftarrow }{d\mathbf{X}}_s$
exist and they are equal a.s. to the Gubinelli integral.
\end{enumerate}
\end{proposition}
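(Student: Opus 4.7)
The plan is to apply the sewing lemma (à la Gubinelli--Feyel--De La Pradelle) to the two-parameter germ
$$\Xi_{s,t} := Y_s X_{s,t}^\top + Y'_s \mathbb{X}_{s,t}.$$
The key computation combines Chen's relation $\mathbb{X}_{s,t} - \mathbb{X}_{s,u} - \mathbb{X}_{u,t} = X_{s,u}X_{u,t}^\top$ with the weakly controlled expansion $Y_u - Y_s = Y'_s X_{s,u} + R^Y_{s,u}$: the cross contributions $Y'_s X_{s,u}X_{u,t}^\top$ cancel pairwise, yielding
$$\Xi_{s,t} - \Xi_{s,u} - \Xi_{u,t} = -R^Y_{s,u}X_{u,t}^\top + (Y'_s - Y'_u)\mathbb{X}_{u,t},$$
so that
$$\bigl|\Xi_{s,t} - \Xi_{s,u} - \Xi_{u,t}\bigr| \leq \bigl(\|R^Y\|_{2\gamma}\|X\|_\gamma + \|Y'\|_\gamma \|\mathbb{X}\|_{2\gamma}\bigr)|t-s|^{3\gamma}.$$
Since $3\gamma > 1$, the sewing lemma produces a unique continuous map $I:[0,T]\to\R^d$ with $I_0 = 0$ satisfying $|I_t - I_s - \Xi_{s,t}| \leq C|t-s|^{3\gamma}$; by construction $I_t$ coincides with the Gubinelli integral of $Y$ against $\mathbf{X}$.

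For (1) I would split
$$\frac{1}{\varepsilon}\int_0^t \Xi_{s,s+\varepsilon}\,ds = \frac{1}{\varepsilon}\int_0^t (I_{s+\varepsilon} - I_s)\,ds + \frac{1}{\varepsilon}\int_0^t \bigl(\Xi_{s,s+\varepsilon} - (I_{s+\varepsilon} - I_s)\bigr)ds.$$
The second summand is uniformly in $t$ bounded by $TC\varepsilon^{3\gamma - 1}$, which vanishes. The first summand equals $\varepsilon^{-1}\int_t^{t+\varepsilon} I_r\,dr - \varepsilon^{-1}\int_0^\varepsilon I_r\,dr$, and by uniform continuity of $I$ on $[0,T]$ it converges uniformly in $t$ to $I_t - I_0 = I_t$. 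This establishes (1) and identifies the stochastic rough integral of Definition \ref{roughstochDEF} with $I$.

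For (2), my plan is to reduce to (1) via the time-reversal identity \eqref{ERoughTReversal} of Remark \ref{TReversal}: after the substitution $s \mapsto T-s$, the backward regularization at time $t$ for $(Y,Y')$ against $\mathbf{X}$ becomes, up to sign, the forward regularization at time $T-t$ for $(\widehat Y, \widehat Y')$ against the time-reversed enhanced rough path $\widehat{\mathbf{X}}$, once $\widehat{\mathbb{X}}$ is defined by the appropriate sign/transpose convention so that Chen's relation and the $2\gamma$-Hölder regularity are preserved. One then checks that $(\widehat Y, \widehat Y')$ is weakly controlled by $\widehat X$ with the same exponents, applies part (1) to the reversed data, and invokes the time-reversal invariance of the Gubinelli integral to conclude that the backward limit exists and equals $I_t$. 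Part (3) is then an immediate consequence of (1) and (2): both regularizations converge uniformly in $t$ (a fortiori in probability) to the same pathwise limit, namely the Gubinelli integral.

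The technical heart of the argument is the cubic Hölder estimate $|\delta\Xi_{s,u,t}| \lesssim |t-s|^{3\gamma}$, the standard starting point of Gubinelli's construction; once in hand, passing from the sewn function to the regularized integral is routine. The main obstacle I anticipate is the careful construction and verification of the time-reversed enhanced rough path $\widehat{\mathbf{X}}$ in part (2), which requires tracking how the symmetric and antisymmetric parts of $\mathbb{X}$ transform under $s \mapsto T-s$ and confirming compatibility with Chen's relation.
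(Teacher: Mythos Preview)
Your argument for item (1) is correct and essentially identical to the paper's: the same germ $\Xi_{s,t}=Y_sX_{s,t}^\top+Y'_s\mathbb X_{s,t}$, the same $\delta_2$-computation via Chen's relation, the same application of the Sewing Lemma (the paper's Lemma \ref{LemmaSewing}), and the same telescoping passage from the sewn map $I$ to the $\varepsilon$-regularized integral.

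For item (2) the approaches diverge. The paper does not go through time reversal; it simply writes that ``the arguments are similar to those of item 1'', i.e.\ one is meant to rerun the sewing machinery for the backward germ $\tilde\Xi_{s,t}:=Y_tX_{s,t}^\top+Y'_t\,\mathbb X_{s,t}$ and identify the resulting sewn function with the same $I$. Your route is instead via Remark \ref{TReversal} and item (1) applied to the reversed data $(\widehat Y,\widehat Y',\widehat{\mathbf X})$.

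The obstacle you flag is genuine and not mere bookkeeping. Expanding $Y_{s,t}=Y'_sX_{s,t}+R^Y_{s,t}$ one finds
\[
\tilde\Xi_{s,t}-\Xi_{s,t}=Y'_s\,X_{s,t}X_{s,t}^\top+\bigl(R^Y_{s,t}X_{s,t}^\top+(Y'_t-Y'_s)\mathbb X_{s,t}\bigr),
\]
and only the bracketed part is $3\gamma$-H\"older; the leading term $Y'_sX_{s,t}X_{s,t}^\top$ is merely $2\gamma$. Under regularization it contributes $\varepsilon^{-1}\int_0^t Y'_sX_{s,s+\varepsilon}X_{s,s+\varepsilon}^\top\,ds$, a quadratic-variation object with no reason to vanish under the stated hypotheses. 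On the time-reversal side the same term resurfaces as the failure of the naive $\widehat{\mathbb X}_{s,t}:=\mathbb X_{T-t,T-s}$ (or its sign variants) to satisfy Chen's relation with $\widehat X$. A concrete test case: for $d=1$, $X$ a Brownian path, $Y=X$, $Y'\equiv 1$, $\mathbb X=\mathbb X^{\rm stra}_{s,t}=\tfrac12(X_t-X_s)^2$, the backward $\varepsilon$-limit equals $\tfrac12 X_t^2+t$ while the forward (Gubinelli) limit is $\tfrac12 X_t^2$. So neither the paper's one-line sketch nor your time-reversal strategy can close item (2) as literally stated with the same $\mathbb X$ in both approximations; your caution about constructing $\widehat{\mathbf X}$ points exactly to where the argument breaks.
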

\begin{remark} \label{R1716}
% In the case of Young integral $\int_0^t Y d^{(y)} X,$
When $Y$ is $\gamma'$-H\"older continuous and $X$ is
  $\gamma$-H\"older continuous, with $\gamma + \gamma' > 1$,
  % Proposition \ref{pfc1}
Proposition 3. in Section 2.2 of \cite{russo2007elements}
stated that the Young integral $\int_0^t Y d^{(y)} X,$
 equals both
the forward and backward integrals $\int_0^t Y d^{\mp} X$.
Proposition \ref{existenceGUB} states an analogous theorem for
the Gubinelli integral, which equals both
$\int_0^\cdot Y_s {d\mathbf{X}}_s $ and
$\int_0^\cdot Y_s \stackrel{\rm \leftarrow }{d\mathbf{X}}_s$.

%  Previous theorem states that the Gubinelli integral
%  extends Young integral.
 \end{remark}

We introduce now  the notion of multi-increments.
Let $k \in \{1,2, 3\}.$
We denote by $\shc_k$ the space of continuous functions
$g:[0,T]^k \rightarrow \R$, denoted by
$(t_1, \ldots, t_k) \mapsto g_{t_1, \ldots, t_k}$  such that
 $g_{t_1, \ldots, t_k}  = 0$ whenever $t_i = t_{i+1}$ for some
 $1\le i \le k-1$.

 For $g \in \shc_2$, we have defined
 $\Vert g \Vert_\alpha$ at \eqref{EIncrBis}.
 For $g\in \mathcal{C}_3$, we set
$$\|g\|_{\alpha,\beta}:= \sup_{s,u,t\in [0,T]}\frac{|g_{tus}|}{|u-s|^\alpha |t-s|^\beta},$$

$$\|g\|_{\mu}:= \inf\Big\{ \sum_i \|g^i\|_{\rho_i,\mu-\rho_i};
g = \sum_{i} g^i, 0 < \rho_i < \mu \Big\},$$
where the latter infimum is taken over all sequences $\{g^i\in \mathcal{C}_3\}$
such that $g = \sum_i g^i$ and for all choices of $\rho_i \in ]0,\mu[$.
 We say that $g \in C^\mu([0,T]^3)$ if $\|g\|_\mu < \infty$.

We introduce the maps
\begin{enumerate}
\item $\delta_1: \shc_1 \rightarrow \shc_2$
defined by
$(\delta_1 f)_{s,t} = f(t) - f(s)$.
\item $\delta_2: \shc_2 \rightarrow \shc_3$
 defined by
$$\delta_2 f_{t_1,t_2,t_3} = - f_{t_2,t_3} + f_{t_1,t_3} -  f_{t_1,t_2}.$$
\end{enumerate}
If $k = 1,2$  and
$f \in \shc_k$, $\delta_k f$ is called
{\bf $k$-increment}  of the function $f$.

In the proof of Proposition \ref{existenceGUB},
as
in \cite{gubtindel},
 it is crucial to make use of the so called
 {\bf Sewing Lemma}.
The lemma below follows directly from Proposition 2.3
in \cite{gubtindel}.

\begin{lemma}  \label{LemmaSewing}
%Suppose that $\X$ fulfills the Chen's relation.
Let $g \in \shc_2$   such that $\delta_2 g \in C^{[\mu]}([0,T]^3),
$
for some $\mu >1$. Then, there exists a unique (up to a constant)
$I \in \shc_1$
and $\shr \in C^{[\mu]}([0,T]^2)$
 such that
$$ g = \delta_1 I + \shr.$$
\end{lemma}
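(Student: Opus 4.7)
The plan is to run the classical Sewing construction: build $I$ as the limit of Riemann-type sums along dyadic refinements, prove Cauchyness using that $\delta_2 g \in C^{[\mu]}$ with $\mu>1$, derive the required $\mu$-regularity of $\shr := g - \delta_1 I$ by telescoping, and obtain uniqueness from the fact that an increment $\delta_1 \varphi$ with $C^{[\mu]}$-norm finite and $\mu>1$ forces $\varphi$ to be constant.

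First, I fix $0 \le s < t \le T$ and, for each $n\ge 0$, introduce the dyadic points $t_k^n := s + k 2^{-n}(t-s)$, $0 \le k \le 2^n$. Set $J_n(s,t) := \sum_{k=0}^{2^n-1} g_{t_k^n, t_{k+1}^n}$. Inserting the midpoints $t_{k+1/2}^n := (t_k^n + t_{k+1}^n)/2$ and using $-(\delta_2 g)_{u,v,w} = g_{u,v} + g_{v,w} - g_{u,w}$ produces the identity
\begin{equation*}
J_{n+1}(s,t) - J_n(s,t) \;=\; -\sum_{k=0}^{2^n-1} (\delta_2 g)_{t_k^n,\, t_{k+1/2}^n,\, t_{k+1}^n}.
\end{equation*}
Given $\epsilon > 0$, pick a decomposition $\delta_2 g = \sum_i h^i$ with $h^i \in C^{\rho_i,\mu-\rho_i}$ and $\sum_i \|h^i\|_{\rho_i,\mu-\rho_i} \le \|\delta_2 g\|_\mu + \epsilon$. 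For $u \le v \le w$ with $w - u \le 2^{-n}(t-s)$ one has $|h^i_{u,v,w}| \le \|h^i\|_{\rho_i,\mu-\rho_i}(2^{-n}(t-s))^\mu$, since both relevant differences are dominated by $w-u$. Summing over $k$ and $i$ gives $|J_{n+1}(s,t) - J_n(s,t)| \le (\|\delta_2 g\|_\mu + \epsilon)(t-s)^\mu\, 2^{-n(\mu-1)}$, which is summable in $n$ as $\mu > 1$. Hence $(J_n(s,t))_n$ is Cauchy; denote the limit by $A(s,t)$, and telescoping yields $|A(s,t) - g_{s,t}| \le C \|\delta_2 g\|_\mu (t-s)^\mu$ with $C := (1-2^{1-\mu})^{-1}$.

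Next, $A$ is finitely additive: for every $s \le u \le t$ with $u$ dyadic in $[s,t]$, refining both sides by a common subdivision gives $A(s,t) = A(s,u) + A(u,t)$; the general case follows from continuity of $g$ (which propagates to $A$ via the uniform estimate) and density of dyadics. Setting $I_0 := 0$ and $I_r := A(0,r)$ for $r \in [0,T]$, additivity yields $(\delta_1 I)_{s,t} = I_t - I_s = A(s,t)$, whence $\shr_{s,t} := g_{s,t} - (\delta_1 I)_{s,t}$ satisfies $|\shr_{s,t}| \le C\|\delta_2 g\|_\mu (t-s)^\mu$ and vanishes on the diagonal, so $\shr \in C^{[\mu]}([0,T]^2)$. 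For uniqueness, given two such decompositions $g = \delta_1 I^{(1)} + \shr^{(1)} = \delta_1 I^{(2)} + \shr^{(2)}$, set $\varphi := I^{(1)} - I^{(2)}$; then $|\varphi_t - \varphi_s| \le (\|\shr^{(1)}\|_\mu + \|\shr^{(2)}\|_\mu)|t-s|^\mu$, and $\mu>1$ forces $\varphi$ to be everywhere differentiable with vanishing derivative, i.e.\ constant.

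The main obstacle is a careful handling of the infimum-based definition of $\|\cdot\|_\mu$: one cannot bound $|(\delta_2 g)_{u,v,w}|$ directly by a single $\|\cdot\|_{\rho,\mu-\rho}$-norm, so the dyadic estimate must be performed componentwise on an approximate decomposition, with the $\epsilon$-slack in the infimum absorbed by the summable factor $2^{-n(\mu-1)}$. Everything else (continuity of $A$, passage from dyadic to arbitrary subdivisions, the constancy of $\varphi$) is routine once this estimate is in place.
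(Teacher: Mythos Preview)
The paper does not actually prove this lemma: it records that the statement ``follows directly from Proposition 2.3 in \cite{gubtindel}'' and moves on. Your proposal therefore supplies, from scratch, the classical dyadic sewing construction that the cited reference contains. The Cauchy estimate on $J_{n+1}-J_n$, the treatment of the infimum norm $\|\cdot\|_\mu$ via an $\varepsilon$-near-optimal decomposition $\delta_2 g=\sum_i h^i$, the resulting bound $|A(s,t)-g_{s,t}|\le C\|\delta_2 g\|_\mu (t-s)^\mu$, and the uniqueness argument (a $\mu$-H\"older increment with $\mu>1$ is constant) are all correct and standard.

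The one step that is genuinely glossed over is the additivity of $A$. When $u$ is dyadic in $[s,t]$, splitting the level-$n$ dyadic partition of $[s,t]$ at $u$ produces \emph{uniform but not dyadic} partitions of $[s,u]$ and $[u,t]$, so ``refining both sides by a common subdivision'' does not by itself show that the two pieces converge to $A(s,u)$ and $A(u,t)$ as you defined them. The cleanest fix is Young's point-removal trick: in any partition of $[s,t]$ with $N$ intervals there is an interior point whose removal alters the Riemann sum by at most $C\|\delta_2 g\|_\mu (2(t-s)/N)^\mu$; iterating and summing $\sum_{k\ge 2} k^{-\mu}<\infty$ shows that \emph{every} Riemann sum lies within $C'(t-s)^\mu$ of $g_{s,t}$, hence all converge to the same limit and additivity is immediate. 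Once this is in place, continuity of $I$ and the extension to arbitrary $u$ follow exactly as you sketch.
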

\begin{prooff} \ (of Proposition \ref{existenceGUB}).
\begin{enumerate}
\item
We set
 \begin{equation}\label{1germ}
   A_{s,t} = Y_s (X_t-X_s)^\top + Y'_s \mathbb{X}_{s,t}, \quad (s,t) \in [0,T]^2.
 \end{equation}
 Then the $2$-increment of $A$ is given by
 \begin{eqnarray}
   \nonumber(\delta_2 A)_{t_1,t_2,t_3} &=&
         Y_{t_1} (X_{t_3} - X_{t_1})^\top + Y'_{t_1}  \X_{t_1,t_3} \\
\nonumber  &-& Y_{t_2} (X_{t_3} - X_{t_2})^\top - Y'_{t_2}  \X_{t_2,t_3}
         - Y_{t_1} (X_{t_2} - X_{t_1})^\top - Y'_{t_1}  \X_{t_1,t_2} \\
   \nonumber
&= &  (Y_{t_2} - Y_{t_1}) (X_{t_2} - X_{t_3})^\top
+ Y'_{t_1} (\X_{t_1,t_3} - \X_{t_2,t_3} - \X_{t_1,t_2}) \\ \nonumber
&-& (Y'_{t_2} -  Y'_{t_1}) \X_{t_2,t_3} \\ \nonumber
&=&  \Big\{ Y_{t_2} -Y_{t_1} - Y'_{t_1}
 \big(X_{t_2} - X_{t_1} \big) \Big\}
 \big(X_{t_2} - X_{t_3}\big)^\top + \big(\delta_1 Y'\big)_{t_1t_2}\mathbb{X}_{t_2,t_3}\\
 \label{difgerm}& &\\
   \nonumber &=& R^Y_{t_1, t_2} (\delta_1 X)^\top_{t_2,t_3} +
                 \big(\delta_1 Y'\big)_{t_1,t_2}\mathbb{X}_{t_2, t_3},
 \end{eqnarray}
 where the third equality follows by Chen's relation.
% where all the terms in (\ref{difgerm}) are in the domain of $\Lambda$.
 By Definition \ref{Gubexample} we have a.s. $Y' \in  C^{[\gamma]}([0,T]),
 R^Y \in C^{[2\gamma]}([0,T]^2)$ and we also have
$\X \in C^{[2\gamma]}([0,T]^2)$.
 Consequently
 % assumption (see Definition \ref{DStochGub}),
 $\delta_2 A \in C^{[3\gamma]}([0,T]^3)$.

 Then, setting $\mu = 3 \gamma$, outside a null set,  Lemma \ref{LemmaSewing}
 applied  to $g = A$,
 provides
  an unique (up to a constant)
  a continuous process $I$ such that
 $$ A_{s,s+\varepsilon} = I_{s+\varepsilon} - I_s + \shr_{s,s+\varepsilon},$$
 where $\shr \in C^{[3\gamma]}([0,T]^2)$.
% $\overline{R}_{s,t}=(\Lambda \delta A)_{s,t}$.
For a given $\varepsilon>0$ and $t\in [0,T]$, we then have
 $$\frac{1}{\varepsilon}\int_0^t A_{s,s+\varepsilon}ds= \frac{1}{\varepsilon}\int_0^t I_{s,s+\varepsilon}ds + \frac{1}{\varepsilon}\int_0^t \shr_{s,s+\varepsilon}ds$$
and
 \begin{equation}\label{tel}
 \lim_{\varepsilon\rightarrow 0^+}\frac{1}{\varepsilon}\int_0^\cdot I_{s,s+\varepsilon}ds
= I_\cdot - I_0,
 \end{equation}
 uniformly in $[0,T]$. By using the fact that
% $\Lambda (\delta A)\in \mathcal{C}_2^{3\gamma}$ with $3\gamma > 1$,
$\shr \in C^{[3 \gamma]}([0,T]^2)$.
 we have
 $$\frac{1}{\varepsilon}\sup_{s\in [0,T]}|\shr_{s,s+\varepsilon}|\le \frac{\varepsilon^{3\gamma}}{\varepsilon}\|\shr\|_{3\gamma}\rightarrow 0, $$
 as $\varepsilon\downarrow 0$. This completes the proof.
\item We fix $\omega$.
  The quantity \eqref{EApprBackw} converges to $I_t$
  where  $I$ is again the (unique) function appearing
in the Sewing Lemma
\ref{LemmaSewing}.
  The arguments are similar to those of item 1.
\item This is a direct consequence of previous points
  and the fact that a.s. $I$ also coincides with the Gubinelli integral.

\end{enumerate}
 \end{prooff}

\begin{theorem} \label{RoughSem}
   Let  $X=(X_t)_{t \in [0,T]}$ be a given continuous $(\shf_t)$-semimartingale
 with values in $\R^d$ and $Y$ be an
  $(\shf_t)$-weak Dirichlet process.
  We set
  $ \X:= \X^{\rm stra}, $ see Example \ref{ESemimartingale}.
%$$ \mathbb{X}_{s,t}=\int_s^t (Xref{_r-X_s)\otimes d^\circ X_r, \ 0 \le s \le t \le T.$$

\noindent  Then the rough
stochastic integral of $Y$ (with càglàd progressively measurable, stochastic Gubinelli derivative
$Y'$)
 with respect to $\textbf{X}=(X, \X)$ coincides with
the Stratonovich integral
% (see Definition \ref{DITOV3}) of $Y$ with respect to $X$,
i.e.
%for every
%$t \in [0,T]$
  \begin{align} \label{eq:roughstoch=fiskstratonovich}
  \int_0^\cdot Y_s d\textbf{X}_s = \int_0^\cdot Y_s \circ dX_s.
  \end{align}
    \end{theorem}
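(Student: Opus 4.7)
My plan is to compute the rough stochastic integral directly and match it against the It\^o--Stratonovich decomposition of $\int_0^\cdot Y\circ dX$. The key algebraic identity is the componentwise It\^o--Stratonovich conversion
$$\int_s^{s+\varepsilon}(X^i_r - X^i_s)\circ dX^j_r = \int_s^{s+\varepsilon}(X^i_r - X^i_s)\, dX^j_r + \tfrac{1}{2}\bigl([X^i,X^j]_{s+\varepsilon} - [X^i,X^j]_s\bigr),$$
which at the matrix level reads $\mathbb{X}^{\rm stra}_{s,s+\varepsilon} = \mathbb{X}^{\rm ito}_{s,s+\varepsilon} + \tfrac{1}{2}[X,X]_{s,s+\varepsilon}$. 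Substituting this into \eqref{stochroughint}, the pre-limit expression splits into three pieces $I_\varepsilon(t) := \tfrac{1}{\varepsilon}\int_0^t Y_s X^\top_{s,s+\varepsilon}\, ds$, $II_\varepsilon(t) := \tfrac{1}{\varepsilon}\int_0^t Y'_s \mathbb{X}^{\rm ito}_{s,s+\varepsilon}\, ds$, and $III_\varepsilon(t) := \tfrac{1}{2\varepsilon}\int_0^t Y'_s [X,X]_{s,s+\varepsilon}\, ds$, whose limits I treat separately.

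The outer pieces are handled by standard calculus-via-regularization arguments. Since $Y$ is continuous and adapted (being a continuous weak Dirichlet process) and $X$ is a continuous semimartingale, $I_\varepsilon(t)$ is by definition the matrix-valued forward integral $\int_0^t Y_s\otimes d^- X_s$ and coincides with the It\^o integral $\int_0^t Y_s\, dX_s^\top$. For $III_\varepsilon(t)$, a Fubini exchange together with Lebesgue differentiation for the c\`agl\`ad (hence left-continuous) process $Y'$, combined with the fact that $[X,X]$ is continuous of bounded variation, gives $III_\varepsilon(t)\to \tfrac{1}{2}\int_0^t Y'_s\, d[X,X]_s$ in probability.

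The main step is to show that the ``stochastic L\'evy-area'' term $II_\varepsilon(t)$ vanishes. Fix a component $j\in\{1,\dots,d\}$; then
$$(II_\varepsilon(t))^j = \frac{1}{\varepsilon}\int_0^t \int_s^{s+\varepsilon} Y'_s(X_r-X_s)\, dX^j_r\, ds,$$
with $Y'_s(X_r-X_s)=\sum_i (Y')^i_s(X^i_r-X^i_s)$ a scalar. I apply the stochastic Fubini theorem (justified by progressive measurability of $Y'$ and continuity of $X$) to exchange the order of integration; after separating the boundary regions $r\in[0,\varepsilon]\cup[t,t+\varepsilon]$, whose contributions tend to zero in probability, I obtain
$$(II_\varepsilon(t))^j = \int_\varepsilon^t \bar Z^\varepsilon_r\, dX^j_r + o_P(1),\qquad \bar Z^\varepsilon_r := \frac{1}{\varepsilon}\int_{r-\varepsilon}^r Y'_s(X_r-X_s)\, ds.$$
Because $Y'$ is c\`agl\`ad it is almost surely bounded on $[0,T]$, and because $X$ is continuous, letting $\omega_X$ denote its (random) modulus of continuity on $[0,T]$ we have the uniform bound $|\bar Z^\varepsilon_r|\le \omega_X(\varepsilon)\cdot\sup_{u\le T}|Y'_u|\to 0$. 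Dominated convergence for stochastic integrals against the semimartingale $X^j$ then forces $(II_\varepsilon(t))^j\to 0$ in probability.

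Collecting the three limits gives $\int_0^t Y_s\, d\mathbf{X}_s = \int_0^t Y_s\, dX_s^\top + \tfrac{1}{2}\int_0^t Y'_s\, d[X,X]_s$. By Proposition \ref{stochex2}(2) the last term equals $\tfrac{1}{2}[Y,X]_t$; on the other hand, for the continuous adapted $Y$ and the continuous semimartingale $X$, the Russo--Vallois symmetric integral admits the decomposition $\int_0^t Y_s\circ dX_s = \int_0^t Y_s\, dX_s + \tfrac{1}{2}[Y,X]_t$ (obtained by expanding $\tfrac{Y_s+Y_{s+\varepsilon}}{2}(X_{s+\varepsilon}-X_s)$ and using that $[Y,X]$ exists for weak Dirichlet $Y$), from which \eqref{eq:roughstoch=fiskstratonovich} follows. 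The essential obstacle in the argument is the vanishing of $II_\varepsilon$: microscopically $\mathbb{X}^{\rm ito}_{s,s+\varepsilon}/\varepsilon$ is of order one and cancellation only occurs after averaging against the left-continuous derivative $Y'$, which crucially exploits the continuity of $X$.
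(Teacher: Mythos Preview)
Your proof is correct and follows essentially the same route as the paper: split $\mathbb{X}^{\rm stra}=\mathbb{X}^{\rm ito}+\tfrac12[X,X]$, identify the first piece with the It\^o integral via the forward integral, show the $\mathbb{X}^{\rm ito}$ contribution vanishes by stochastic Fubini and the modulus-of-continuity bound $|\bar Z^\varepsilon_r|\le \omega_X(\varepsilon)\sup|Y'|$, and recognize the $[X,X]$ piece as $\tfrac12[Y,X]$ via Proposition~\ref{stochex2}(2). The only cosmetic difference is that the paper inserts an explicit localization step (reducing to bounded $[M^i]$, $\|V^i\|$, $X^i$, $Y'$) before invoking Fubini and dominated convergence, whereas you rely directly on the a.s.\ boundedness of the c\`agl\`ad $Y'$ on $[0,T]$; both are fine.
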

  \begin{remark} \label{RStocRougSem}
    \begin{enumerate}
      \item
 In Proposition \ref{stochex2} we have shown the
    existence of a progressively measurable process $Y'$
    such that $(Y, Y')$ belongs to $\shd_X$.
%\item In the statement we choose such a $Y'$.
  \item  \eqref{eq:roughstoch=fiskstratonovich} implies that
the value of the rough stochastic integral
    does not depend on $Y'$.
\end{enumerate}
\end{remark}

  \begin{prooff} \ (of Theorem \ref{RoughSem}).

    The rough stochastic integral $\int_0^\cdot Y d \textbf{X}_s$
    defined in
    \eqref{stochroughint}
    exists if we prove in particular that
 the two limits below
\begin{equation} \label{E1718}
  \lim_{\varepsilon \rightarrow 0}  \frac{1}{\varepsilon}
  \int_0^t Y_s X^\top_{s, s+\varepsilon} ds  \quad \text{ and }  \lim_{\varepsilon \rightarrow 0}
\frac{1}{\varepsilon} \int_0^t Y'_s \X_{s,s+\varepsilon} ds,
  \end{equation}
exist in probability.
  We will even prove the ucp convergence of \eqref{E1718}.
  Let us fix $i \in \{1, \dots,d\}$.
% and $j \in \{1, \dots n\}$.
 By Proposition 6. in \cite{russo2007elements} we have
%Proposition \ref{PITOV7}
  \begin{align} \label{eq:proofroughstochintegral-1}
  \displaystyle \lim_{\varepsilon \rightarrow 0} \int_0^t Y_s \frac{X^i_{s+\varepsilon}- X^i_s}{\varepsilon} ds= \int_0^t Y_s d^{-}X^i_s= \int_0^t Y_s dX^i_s,
  \end{align}
 ucp, where the second integral in the equality is the usual It\^o's
 stochastic integral.

  \noindent We show now that
  \begin{align}\label{eq:proofroughstochintegral-2}
    \frac{1}{\varepsilon} \int_0^t Y'_s \X_{s,s+\varepsilon} ds
    \rightarrow \frac{1}{2}[Y,X]_t, t \in [0,T],
  \end{align}
   holds ucp as $\varepsilon \rightarrow 0$.

  \noindent Let $i \in \{1, \dots, d\}$.
% and $j \in \{1, \dots,n\}$.
We write, for every $t \in [0,T],$ an element of  the vector
  $  \frac{1}{\varepsilon} \int_0^t Y'_s \X_{s,s+\varepsilon} ds$ as
  \begin{align*}
    \frac{1}{\varepsilon} \Big (\int_0^t Y'_s \X_{s, s+\varepsilon} ds
    \Big )_{i} &=
                 \sum_{k=1}^d                                                           \int_0^t  (Y'_s)^k
                \left( \frac{1}{\varepsilon}\int_s^{s+\varepsilon} (X^k_r - X^k_s)
                 \circ d X^i_r\right) ds.
  \end{align*}
 % where we used the definition of the usual product of matrices. \\
  The definition of Stratonovich integral yields
  \[\sum_{k=1}^d\int_0^t   (Y'_s)^k
    \Big(\frac{1}{\varepsilon} \int_s^{s+\varepsilon} (X^k_r - X^k_s) \circ
 d X^i_r\Big) ds =\sum_{k=1}^d  \int_0^t (Y'_s)^k
           \Big(\frac{1}{\varepsilon}  \int_s^{s+\varepsilon}
 (X^k_r- X^k_s)  d X^i_r\Big) ds\]
 \[\qquad \qquad \qquad \qquad \qquad \qquad \qquad \qquad +\frac{1}{2 \varepsilon} \sum_{k=1}^d \int_0^t  (Y'_s)^k [X^k - X^k_s,
 X^i]_{s,s+\varepsilon} ds.\]
  Obviously $[X^k - X^k_s, X^i] = [X^k,X^i]$.
  Since the covariations  $[X^k,X^i]$ are bounded variation processes,
item 7. of Proposition 1. in \cite{russo2007elements}
 shows that
  the second term in the right-hand side of the latter
 identity converges  in ucp as $\varepsilon \rightarrow 0$ to
  \begin{align*}
    \frac{1}{2} \sum_{k=1}^d \int_0^t (Y'_r)^k  d[X^k, X^i]_r = \frac{1}{2}
\left(\int_0^t Y'_r d[X]_r\right)_i = \frac{1}{2} [Y,X^i]_t,
  \end{align*}
  where the latter equality follows by \eqref{EGubStrat} in
  Proposition \ref{stochex2}.
 
  \noindent We complete the proof if we show
 that for every $i \in \{1, \dots, d\}$ and $k \in \{1, \dots,d\}$  the  ucp limit
  \begin{align}  \label{eq:thmroughstochintegral-lastlimittoprove1}
  \int_0^t (Y'_s)^k \left(\frac{1}{\varepsilon} \int_s^{s+\varepsilon} (X^k_r-X^k_s) dX^i_r\right) ds \rightarrow 0 \quad \text{ as } \varepsilon \rightarrow 0,
  \end{align}
holds. Let $M^i +V^i$ be the canonical decomposition of
the semimartingale $X^i$. By usual localization arguments
we can reduce to the case when $[M^i], \Vert V^i \Vert(T),
X^i, (Y')$ are bounded processes.
 Using the stochastic Fubini's Theorem
%\ref{PITOV5}
 (see Theorem 64, Chapter 6 in \cite{prot}),
%search exact reference in \cite{prot} or \cite{ks})
 we can write
  \begin{align*}
    \int_0^t (Y'_s)^k
  \left (\frac{1}{\varepsilon}   \int_s^{s+\varepsilon} (X^k_r - X^k_s)
dX^i_r\right)
   ds = \int_0^{t+\varepsilon}  \left(\frac{1}{\varepsilon} \int_{(r-\varepsilon)^+}^{r \wedge t} (Y'_s)^k (X^k_r - X^k_s) ds\right) dX^i_r.
  \end{align*}
  \noindent  For $\varepsilon>0,$  and $k \in\{1,\ldots,d\},$
let us define the auxiliary process
  \begin{align*}
  \xi^\varepsilon(t):=  \frac{1}{\varepsilon} \int_{(r-\varepsilon)^+}^{r \wedge t} (Y'_s)^k (X^k_r - X^k_s) ds.
\end{align*}
Controlling the border terms as usual,
%Proposition 2.26 Chapter 3 in \cite{ks} resumes the proof of
by Problem 5.25 Chapter 1. of \cite{ks}
%taking into account Lemma \ref{BDGfaible} implies
(\ref{eq:thmroughstochintegral-lastlimittoprove1}), it remains to show that
 the limit in probability
\begin{align} \label{eq:thmroughstochintegral-lastlimittoprove2}
\int_0^{T} |\xi^\varepsilon(r)|^2 d[X^i]_r \rightarrow 0 \text{ as } \varepsilon \rightarrow 0 \quad \text{holds}.
\end{align}
\noindent
 Denoting by $\delta(X,\cdot)$ the continuity modulus of $X$
on $[0,T]$,
%Cauchy-Schwarz inequality implies
$$
%\begin{eqnarray*}
\int_0^{T} |\xi^\varepsilon(r)|^2 d[X^i]_r
\le \delta(X, \varepsilon)^2  \sup_{s \in [0,T]} \vert (Y'_s)^{k}\vert^2
[X^i]_T,
%\end{eqnarray*}
$$
which obviously converges a.s. to zero.
This concludes
 the proof of (\ref{eq:proofroughstochintegral-2}).

\noindent Combining (\ref{eq:proofroughstochintegral-1}) and (\ref{eq:proofroughstochintegral-2}) we finish the proof of (\ref{eq:roughstoch=fiskstratonovich}).
  \end{prooff}

  Through a similar but simpler proof (left to the reader) than the one of
Theorem \ref{RoughSem}
  we have the following.
  \begin{theorem} \label{RoughSemIto}
   Let  $X=(X_t)_{t \in [0,T]}$ be a given continuous $(\shf_t)$-semimartingale
   with values in $\R^d$ and let $Y$
   %(resp. with stochastic
   % Gubinelli derivative $Y'$)
   be  a.s.
   bounded and progressively measurable. Suppose moreover
   that $Y$ has a càglàd progressively measurable
   Gubinelli derivative $Y'$.
%$(\shf_t)$-càglàd progressively measurable process.
%\textcolor{red}{(the conditions on $Y$ are a bit confusing)}.
  We set
  $ \X:= \X^{\rm ito}, $ see Example \ref{ESemimartingale}.
 Then the rough
stochastic integral of $Y$ with respect to $\textbf{X}=(X, \X)$ coincides with
the It\^o integral of $Y$ with respect to $X$,
i.e.
%for every
%$t \in [0,T]$
  \begin{align} \label{eq:roughstoch=ito}
  \int_0^\cdot Y_s d\textbf{X}_s = \int_0^\cdot Y_s  dX_s.
  \end{align}
    \end{theorem}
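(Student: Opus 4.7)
The plan is to mimic the proof of Theorem \ref{RoughSem}, observing that the It\^o choice of $\X$ eliminates precisely the covariation correction that produced the $\frac{1}{2}[Y,X]_t$ contribution there. I would split the defining limit (\ref{stochroughint}) along the lines of (\ref{E1718}), treating the two pieces separately, and the structural simplification is that the ``area" piece converges directly to zero rather than to a covariation term.

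For the first piece, $\frac{1}{\varepsilon}\int_0^t Y_s X^\top_{s,s+\varepsilon}\,ds$, I would invoke Proposition 6 of \cite{russo2007elements}: since $Y$ is bounded and progressively measurable, the forward integral $\int_0^t Y_s d^-X^i_s$ exists and coincides with the It\^o integral $\int_0^t Y_s dX^i_s$, with ucp convergence. Alternatively, a direct stochastic Fubini argument combined with Lebesgue differentiation of $\frac{1}{\varepsilon}\int_{(r-\varepsilon)^+}^{r\wedge t} Y_s\,ds$ delivers the same conclusion and makes the boundedness hypothesis on $Y$ completely transparent.

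For the second piece, $\frac{1}{\varepsilon}\int_0^t Y'_s \X^{\rm ito}_{s,s+\varepsilon}\,ds$, the claim is that the ucp limit is $0$. Writing the $(i,j)$-entry as
\[
\sum_{k=1}^d \int_0^t (Y'_s)^k \left(\frac{1}{\varepsilon}\int_s^{s+\varepsilon} (X^k_r - X^k_s)\,dX^i_r\right) ds,
\]
and localizing so that $Y'$, $X$, $[M^i]$, and $\|V^i\|(T)$ are bounded (with $X^i = M^i + V^i$ the canonical decomposition), I would apply the stochastic Fubini theorem (Theorem 64, Chapter 6 of \cite{prot}) to rewrite this integral as $\int_0^{t+\varepsilon} \xi^\varepsilon(r)\,dX^i_r$, where
\[
\xi^\varepsilon(r) := \frac{1}{\varepsilon}\int_{(r-\varepsilon)^+}^{r\wedge t} (Y'_s)^k (X^k_r - X^k_s)\,ds.
\]
The crucial uniform estimate is $|\xi^\varepsilon(r)| \le \delta(X^k,\varepsilon)\,\sup_{s\in[0,T]}|(Y'_s)^k|$ with $\delta(X^k,\cdot)$ the continuity modulus of $X^k$; by continuity of $X^k$ the right-hand side tends to $0$ a.s. Dominated convergence handles the $dV^i$ part, while Problem 5.25 Chapter 1 of \cite{ks} reduces the $dM^i$ part to showing that $\int_0^T |\xi^\varepsilon(r)|^2\,d[M^i]_r \to 0$ in probability, which is again immediate from the same uniform bound.

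The main place where one must take care is the boundary adjustment in the stochastic Fubini step (the upper limit becoming $t+\varepsilon$ rather than $t$) and the fact that, unlike in Theorem \ref{RoughSem}, we are not entitled to use a Proposition \ref{stochex2}-type identity to represent a covariation $[Y,X^i]$; but the $\X^{\rm ito}$ choice removes the need for any such representation, which is exactly why the proof is simpler. Assembling the two convergences yields (\ref{eq:roughstoch=ito}).
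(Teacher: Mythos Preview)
Your proposal is correct and follows exactly the route the paper intends: it is the proof of Theorem \ref{RoughSem} with the Stratonovich-to-It\^o decomposition of $\X^{\rm stra}$ removed, so that the ``area'' term handled via stochastic Fubini and the continuity-modulus bound on $\xi^\varepsilon$ goes directly to $0$ rather than leaving a $\tfrac12[Y,X]$ correction. You have also correctly identified why this is the promised simplification---no appeal to Proposition \ref{stochex2} is needed since there is no covariation term to reinterpret.
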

    Theorems \ref{RoughSem} and \ref{RoughSemIto} somehow extend
Proposition 5.1 in \cite{friz} and Corollary 5.2 in  \cite{hairerbook}.
In this paper, $(Y,Y')$ does not necessarily have H\"older continuous paths
with the classical regularity in the sense of rough paths.

\bigskip

{\bf ACKNOWLEDGMENTS.}

%The financial support of the A. Gomes de Oliveira was provided
%by a postdoc fellowship
% (S{\~a}o Paulo State, Brazil).
The support of A. Ohashi and F. Russo research related to this paper
 was financially supported by the Regional Program MATH-AmSud 2018,
 project Stochastic analysis of non-Markovian phenomena (NMARKOVSOC),
 grant 88887.197425/2018-00.
%% since it is closed
%% Alberto do we need to thank our previous grant?
%%% It is a better to quote a future grant.
%% Andre did not put this in his report
The authors are also grateful to Pierre Vallois (Nancy)
for stimulating discussions.

%\vfill \eject

\bibliographystyle{plain}
\begin{quote}
%\bibliographystyle{abnt-alf}
%\bibliography{../BIBLIO_FILE/ref}
\bibliography{../../../BIBLIO_FILE/biblio-PhD-Alan.bib}

\def\cprime{$'$}
\begin{thebibliography}{10}

\bibitem{nunno}
D.~R. {Ba{\~n}os}, F.~{Cordoni}, G.~{Di Nunno}, L.~{Di Persio}, and E.~E.
  {R{\o}se}.
\newblock {Stochastic systems with memory and jumps}.
\newblock {\em ArXiv e-prints}, March 2016.

\bibitem{BandiniRusso1}
E.~Bandini and F.~Russo.
\newblock Weak {D}irichlet processes with jumps.
\newblock {\em Stochastic Process. Appl.}, 127(12):4139--4189, 2017.

\bibitem{BandiniRusso2}
E.~Bandini and F.~Russo.
\newblock Special weak {D}irichlet processes and {BSDE}s driven by a random
  measure.
\newblock {\em Bernoulli}, 24(4A):2569--2609, 2018.

\bibitem{bbv11}
B.~B\'erard-Bergery and P.~Vallois.
\newblock Convergence at first and second order of some approximations of
  stochastic integrals.
\newblock In {\em S\'eminaire de {P}robabilit\'es {XLIII}}, volume 2006 of {\em
  Lecture Notes in Math.}, pages 241--268. Springer, Berlin, 2011.

\bibitem{ber2}
J.~Bertoin.
\newblock Sur une int\'egrale pour les processus \`a {$\alpha$}-variation
  born\'ee.
\newblock {\em Ann. Probab.}, 17(4):1521--1535, 1989.

\bibitem{chevyrev2019}
I.~Chevyrev and P.~K. Friz.
\newblock Canonical rdes and general semimartingales as rough paths.
\newblock {\em Annals of Probability}, 47(1):420--463, 2019.

\bibitem{cjms}
F.~Coquet, A.~Jakubowski, J.~M{\'e}min, and L.~S{\l}omi{\'n}ski.
\newblock Natural decomposition of processes and weak {D}irichlet processes.
\newblock In {\em In memoriam {P}aul-{A}ndr\'e {M}eyer: {S}\'eminaire de
  {P}robabilit\'es {XXXIX}}, volume 1874 of {\em Lecture Notes in Math.}, pages
  81--116. Springer, Berlin, 2006.

\bibitem{cosso_russo15a}
A.~Cosso and F.~Russo.
\newblock Functional and {B}anach space stochastic calculi: path-dependent
  {K}olmogorov equations associated with the frame of a {B}rownian motion.
\newblock In {\em Stochastics of environmental and financial
  economics---{C}entre of {A}dvanced {S}tudy, {O}slo, {N}orway, 2014--2015},
  volume 138 of {\em Springer Proc. Math. Stat.}, pages 27--80. Springer, Cham,
  2016.

\bibitem{coutin2005semi}
L.~Coutin and A.~Lejay.
\newblock Semi-martingales and rough paths theory.
\newblock {\em Electronic Journal of Probability}, 10:761--785, 2005.

\bibitem{coviello2bis}
R.~Coviello, C.~Di~Girolami, and F.~Russo.
\newblock On stochastic calculus related to financial assets without
  semimartingales.
\newblock {\em Bull. Sci. Math.}, 135(6-7):733--774, 2011.

\bibitem{coviello1}
R.~Coviello and F.~Russo.
\newblock Nonsemimartingales: stochastic differential equations and weak
  {D}irichlet processes.
\newblock {\em Ann. Probab.}, 35(1):255--308, 2007.

\bibitem{DGR}
C.~Di~Girolami and F.~Russo.
\newblock Infinite dimensional stochastic calculus via regularization and
  applications.
\newblock {\em Preprint \textup{HAL-INRIA, inria-00473947 version 1}},
  (Unpublished), 2010.

\bibitem{DGRnote}
C.~Di~Girolami and F.~Russo.
\newblock {C}lark-{O}cone type formula for non-semimartingales with finite
  quadratic variation.
\newblock {\em Comptes Rendus Mathematique}, 349(3-4):209 -- 214, 2011.

\bibitem{DGR2}
C.~Di~Girolami and F.~Russo.
\newblock Generalized covariation and extended {Fu}kushima decompositions for
  {B}anach space valued processes. application to windows of {D}irichlet
  processes.
\newblock {\em Infinite Dimensional Analysis, Quantum Probability and Related
  Topics (IDA-QP).}, 15(2), 6 2012.

\bibitem{DGR1}
C.~Di~Girolami and Francesco Russo.
\newblock {G}eneralized covariation for {B}anach space valued processes,
  {I}t\^o formula and applications.
\newblock {\em Osaka Journal of Mathematics}, 51:729--783, 2014.

\bibitem{DGRclassical}
C.~Di~Girolami and Francesco Russo.
\newblock About classical solutions of the path-dependent heat equation.
\newblock {\em Random Oper. Stoch. Equ.}, 28(1):35--62, 2020.

\bibitem{er}
M.~Errami and F.~Russo.
\newblock Covariation de convolution de martingales.
\newblock {\em C. R. Acad. Sci. Paris S\'er. I Math.}, 326(5):601--606, 1998.

\bibitem{er2}
M.~Errami and F.~Russo.
\newblock {$n$}-covariation, generalized {D}irichlet processes and calculus
  with respect to finite cubic variation processes.
\newblock {\em Stochastic Process. Appl.}, 104(2):259--299, 2003.

\bibitem{flaguru}
F.~Flandoli, M.~Gubinelli, and F.~Russo.
\newblock On the regularity of stochastic currents, fractional {B}rownian
  motion and applications to a turbulence model.
\newblock {\em Ann. Inst. Henri Poincar\'e Probab. Stat.}, 45(2):545--576,
  2009.

\bibitem{FolDir}
H.~F{\"o}llmer.
\newblock Dirichlet processes.
\newblock In {\em Stochastic integrals ({P}roc. {S}ympos., {U}niv. {D}urham,
  {D}urham, 1980)}, volume 851 of {\em Lecture Notes in Math.}, pages 476--478.
  Springer, Berlin, 1981.

\bibitem{friz2020rough}
P.~Friz and P.~Zorin-Kranich.
\newblock Rough semimartingales and $ p $-variation estimates for martingale
  transforms.
\newblock {\em ArXiv preprint arXiv:2008.08897}, 2020.

\bibitem{hairerbook}
P.~K. Friz and M.~Hairer.
\newblock {\em A course on rough paths}.
\newblock Universitext. Springer, Cham, 2014.
\newblock With an introduction to regularity structures.

\bibitem{friz}
P.~K. Friz and N.~B. Victoir.
\newblock {\em Multidimensional stochastic processes as rough paths}, volume
  120 of {\em Cambridge Studies in Advanced Mathematics}.
\newblock Cambridge University Press, Cambridge, 2010.
\newblock Theory and applications.

\bibitem{galtchouk}
L.~I. Gal{\cprime}{\v{c}}uk.
\newblock A representation of certain martingales.
\newblock {\em Teor. Verojatnost. i Primenen.}, 21(3):613--620, 1976.

\bibitem{gr1}
F.~Gozzi and F.~Russo.
\newblock Verification theorems for stochastic optimal control problems via a
  time dependent {F}ukushima-{D}irichlet decomposition.
\newblock {\em Stochastic Process. Appl.}, 116(11):1530--1562, 2006.

\bibitem{gr}
F.~Gozzi and F.~Russo.
\newblock Weak {D}irichlet processes with a stochastic control perspective.
\newblock {\em Stochastic Process. Appl.}, 116(11):1563--1583, 2006.

\bibitem{gradno}
M.~Gradinaru and I.~Nourdin.
\newblock Approximation at first and second order of {$m$}-order integrals of
  the fractional {B}rownian motion and of certain semimartingales.
\newblock {\em Electron. J. Probab.}, 8:no. 18, 26 pp. (electronic), 2003.

\bibitem{gnrv}
M.~Gradinaru, Ivan Nourdin, F.~Russo, and P.~Vallois.
\newblock {$m$}-order integrals and generalized {I}t\^o's formula: the case of
  a fractional {B}rownian motion with any {H}urst index.
\newblock {\em Ann. Inst. H. Poincar\'e Probab. Statist.}, 41(4):781--806,
  2005.

\bibitem{grv}
M.~Gradinaru, F.~Russo, and P.~Vallois.
\newblock Generalized covariations, local time and {S}tratonovich {I}t\^o's
  formula for fractional {B}rownian motion with {H}urst index {$H\ge\frac14$}.
\newblock {\em Ann. Probab.}, 31(4):1772--1820, 2003.

\bibitem{gubinelli2004}
M.~Gubinelli.
\newblock Controlling rough paths.
\newblock {\em Journal of Functional Analysis}, 216(1):86--140, 2004.

\bibitem{gubtindel}
M.~Gubinelli and S.~Tindel.
\newblock Rough evolution equations.
\newblock {\em The Annals of Probability}, 38(1):1--75, 2010.

\bibitem{ks}
I.~Karatzas and S.~E. Shreve.
\newblock {\em Brownian motion and stochastic calculus}, volume 113 of {\em
  Graduate Texts in Mathematics}.
\newblock Springer-Verlag, New York, second edition, 1991.

\bibitem{kw}
H.~Kunita and S.~Watanabe.
\newblock On square integrable martingales.
\newblock {\em Nagoya Math. J.}, 30:209--245, 1967.

\bibitem{le2020stochastic}
Khoa L{\^e}.
\newblock A stochastic sewing lemma and applications.
\newblock {\em Electronic Journal of Probability}, 25, 2020.

\bibitem{tindelliu}
Y.~Liu, Z.~Selk, and S.~Tindel.
\newblock Convergence of trapezoid rule to rough integrals.
\newblock {\em arXiv preprint arXiv:2005.06500}, 2020.

\bibitem{lyonsq}
T.~Lyons and Zh. Qian.
\newblock {\em System control and rough paths}.
\newblock Oxford Mathematical Monographs. Oxford University Press, Oxford,
  2002.
\newblock Oxford Science Publications.

\bibitem{lyons}
T.~J. Lyons.
\newblock Differential equations driven by rough signals.
\newblock {\em Rev. Mat. Iberoamericana}, 14(2):215--310, 1998.

\bibitem{ORrough}
A.~Ohashi and F.~Russo.
\newblock Rough paths, skorohod integrals driven by covariance singular
  gaussian processes.
\newblock {\em In preparation}, 2021.

\bibitem{prot}
Ph. Protter.
\newblock {\em Stochastic integration and differential equations}, volume~21 of
  {\em Applications of Mathematics (New York)}.
\newblock Springer-Verlag, Berlin, 1990.
\newblock A new approach.

\bibitem{RVCras}
F.~Russo and P.~Vallois.
\newblock Int\'egrales progressive, r\'etrograde et sym\'etrique de processus
  non adapt\'es.
\newblock {\em C. R. Acad. Sci. Paris S\'er. I Math.}, 312(8):615--618, 1991.

\bibitem{rv1}
F.~Russo and P.~Vallois.
\newblock Forward, backward and symmetric stochastic integration.
\newblock {\em Probab. Theory Related Fields}, 97(3):403--421, 1993.

\bibitem{rv93}
F.~Russo and P.~Vallois.
\newblock Noncausal stochastic integration for l\`ad l\`ag processes.
\newblock In {\em Stochastic analysis and related topics (Oslo, 1992)},
  volume~8 of {\em Stochastics Monogr.}, pages 227--263. Gordon and Breach,
  Montreux, 1993.

\bibitem{rv2}
F.~Russo and P.~Vallois.
\newblock The generalized covariation process and {I}t\^o formula.
\newblock {\em Stochastic Process. Appl.}, 59(1):81--104, 1995.

\bibitem{rv96}
F.~Russo and P.~Vallois.
\newblock It\^o formula for {$C\sp 1$}-functions of semimartingales.
\newblock {\em Probab. Theory Related Fields}, 104(1):27--41, 1996.

\bibitem{rv4}
F.~Russo and P.~Vallois.
\newblock Stochastic calculus with respect to continuous finite quadratic
  variation processes.
\newblock {\em Stochastics Stochastics Rep.}, 70(1-2):1--40, 2000.

\bibitem{russo2007elements}
F.~Russo and P.~Vallois.
\newblock Elements of stochastic calculus via regularization.
\newblock In {\em S{\'e}minaire de Probabilit{\'e}s XL}, pages 147--185.
  Springer, 2007.

\bibitem{russoviens}
F.~Russo and F.~Viens.
\newblock Gaussian and non-{G}aussian processes of zero power variation.
\newblock {\em ESAIM Probab. Stat.}, 19:414--439, 2015.

\bibitem{young}
L.~C. Young.
\newblock An inequality of {H}\"older type, connected with {S}tieltjes
  integration.
\newblock {\em Acta Math.}, 67:251--282, 1936.

\end{thebibliography}
 \end{quote}

\end{document}